\documentclass[11pt]{amsart}
\usepackage{amssymb,latexsym,amsmath,graphicx,graphics,epic,eepic}

\addtolength{\oddsidemargin}{-2pc}
\addtolength{\evensidemargin}{-2pc}
\addtolength{\textwidth}{4pc}

\theoremstyle{plain}
\newtheorem{theorem}{Theorem}
\numberwithin{theorem}{section}
\newtheorem{lemma}[theorem]{Lemma}

\newtheorem{corollary}[theorem]{Corollary}

\theoremstyle{definition}

\newcommand{\C}{{\mathbb C}}
\newcommand{\R}{{\mathbb R}}
\newcommand{\Z}{{\mathbb Z}}
\newcommand{\Q}{{\mathbb Q}}
\renewcommand{\P}{{\mathbb P}}
\newcommand{\s}{{\mathbb S}}

              \renewcommand{\L}{{\mathcal L}}


\begin{document}
\title{Hurwitz-type bound, knot surgery, and smooth $\s^1$-four-manifolds}
\author{Weimin Chen}
\subjclass[2000]{}
\keywords{}
\thanks{The author is partially supported by NSF grant DMS-1065784.}
\date{\today}
\maketitle

\begin{abstract}
In this paper we prove several related results concerning smooth $\Z_p$ or $\s^1$ actions on 
$4$-manifolds.  We show that there exists an infinite sequence of smooth $4$-manifolds $X_n$,
$n\geq 2$, which have the same integral homology and intersection form and the same Seiberg-Witten invariant, such that each $X_n$ supports no smooth $\s^1$-actions but admits a smooth $\Z_n$-action.
In order to construct such manifolds, we devise a method for annihilating smooth
$\s^1$-actions on $4$-manifolds using Fintushel-Stern knot surgery, and apply it to the Kodaira-Thurston manifold in an equivariant setting. Finally, the method for annihilating smooth $\s^1$-actions relies 
on a new obstruction we derived in this paper for existence of smooth 
$\s^1$-actions on a $4$-manifold: the fundamental group of a smooth $\s^1$-four-manifold with nonzero Seiberg-Witten invariant must have infinite center. We also include a discussion on various analogous or related results in the literature, including locally linear actions or smooth actions in dimensions other than four.
\end{abstract}

\section{Introduction}

In this paper we discuss several related problems concerning existence or non-existence 
of smooth $\Z_p$ or $\s^1$ actions on a $4$-manifold 
(all group actions are assumed to be effective).

\vspace{3mm}

{\it Problem 1: Construct smooth $4$-manifolds with bounded topology, which support no smooth
$\s^1$-actions but admit smooth $\Z_p$-actions for arbitrarily large prime $p$.
}
\vspace{3mm}

This problem was motivated by the Hurwitz-type theorems concerning bound of automorphisms
of smooth projective varieties of general type (cf. \cite{Xiao, HMX}). In \cite{C0} the author initiated 
an investigation on an analogous question concerning bound of periodic diffeomorphisms 
of $4$-manifolds. Namely:

\vspace{2mm}

{\it Suppose a smooth $4$-manifold $X$ supports no smooth $\s^1$-actions, is there a $C>0$ 
such that there are no smooth $\Z_p$-actions on $X$ for any prime $p>C$?
}
\vspace{2mm}

We shall call such a constant $C$ a {\it Hurwitz-type bound} of $X$ for smooth $\Z_p$-actions. An 
interesting discovery in \cite{C0} was that Hurwitz-type bound for smooth actions of a $4$-manifold 
no longer depends on the topology (e.g. homology) alone, but also on the smooth structure of the manifold. More concretely, a Hurwitz-type bound was established in \cite{C0} for holomorphic 
$\Z_p$-actions which depends only on the homology:
$$
p\leq C=c(1+b_1+b_2+|Tor H_2|)
$$
where $b_i$ is the $i$-th Betti number, $Tor H_2$ is the torsion subgroup of the second homology, and $c>0$ is a universal constant. However, for smooth, even symplectic, $\Z_p$-actions,
the order $p$ is no longer bounded by the homology alone. In fact, we showed in \cite{C0} that 
for every prime number $p\geq 5$, there is a smooth $4$-manifold $X_p$ homeomorphic to the rational elliptic surface $\C\P^2\# 9 \overline{\C\P^2}$, such that $X_p$ supports no smooth 
$\s^1$-actions but admits a smooth $\Z_p$-action, which even preserves a symplectic structure
on $X_p$. Moreover, regarding symplectic $\Z_p$-actions, a Hurwitz-type bound was established
in \cite{C0} which involves the Betti numbers and the canonical class of the manifold.
Note that the latter, being a Seiberg-Witten basic class, is an invariant of the smooth structure.
It is a natural question as whether such a Hurwitz-type bound exists for smooth $\Z_p$-actions
(cf. \cite{C,C0}). The following theorem provides a negative answer to this question.

\begin{theorem}
There exist smooth $4$-manifolds $X_n$, $n\geq 2$, which have the same integral homology, 
intersection form, and Seiberg-Witten invariant, such that each $X_n$ supports no smooth 
$\s^1$-actions but admits a smooth $\Z_n$-action. Moreover, the Seiberg-Witten invariant of $X_n$
is nonzero. 
\end{theorem}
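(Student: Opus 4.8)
The plan is to leverage the obstruction recorded in the introduction: a smooth $\s^1$-four-manifold with nonzero Seiberg-Witten invariant must have $\pi_1$ with infinite center. Thus it suffices to produce, for each $n\geq 2$, a smooth $4$-manifold $X_n$ enjoying four properties: (i) nonzero Seiberg-Witten invariant, (ii) fundamental group with \emph{finite} center, (iii) a smooth $\Z_n$-action, and (iv) integral homology, intersection form, and Seiberg-Witten invariant that do not depend on $n$. Properties (i) and (ii) force, via the obstruction, the nonexistence of any smooth $\s^1$-action on $X_n$, while (iii) and (iv) together show that the order of the admissible cyclic actions is unbounded even though all the listed invariants are fixed, which is exactly the failure of a Hurwitz-type bound.

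The starting point is the Kodaira-Thurston manifold $M$, a $T^2$-bundle over $T^2$. It is symplectic with $b^+=2>1$, so by Taubes' theorem its Seiberg-Witten invariant is nonzero; being a nilmanifold it carries a free $\s^1$-action, and its fundamental group is a torsion-free $2$-step nilpotent group whose center is infinite and contains the class $z$ of the $\s^1$-orbit. To build the family $X_n$ I would apply Fintushel-Stern knot surgery equivariantly. Fix a knot $K$ with trivial Alexander polynomial but nontrivial, hence nonabelian, knot group; the untwisted Whitehead double of the trefoil is a concrete choice. Let $\Z_n\subset\s^1$ act on $M$ and choose a self-intersection-zero torus $T$, transverse to the $\s^1$-orbits, whose $\Z_n$-translates $T_0,\dots,T_{n-1}$ are disjoint. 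Performing knot surgery on all of them simultaneously with $K$ yields $X_n$, and the cyclic permutation of the $n$ surgery pieces furnishes the required $\Z_n$-action. Note that the \emph{full} circle cannot act, since a continuous rotation in the $z$-direction fails to preserve the $n$ discrete surgery sites; only the finite subgroup survives.

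The invariants behave as needed. Each surgery replaces $T^2\times D^2$ by $(S^3\setminus\nu(K))\times S^1$, a piece with the same homology relative to its boundary $T^3$; hence the integral homology and intersection form of $X_n$ coincide with those of $M$, and because the $T_i$ lie in a single $\s^1$-orbit they are mutually homologous, so the outcome is independent of $n$. The Fintushel-Stern formula gives $SW_{X_n}=SW_M\cdot\prod_{i}\Delta_K(t_i)=SW_M\cdot\Delta_K(t)^n$, and since $\Delta_K\equiv 1$ this equals $SW_M$, nonzero and independent of $n$. One technical point to confirm here is that the surgery formula applies in this non-simply-connected setting, where $\pi_1(M\setminus T)\neq 1$; I would invoke the appropriate relative version of the formula for a $c$-embedded torus.

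The crux, and the step I expect to be the main obstacle, is the analysis of $\pi_1(X_n)$ and the proof that its center is finite. I would arrange the gluing of each $(S^3\setminus\nu(K))\times S^1$ so that the generic $z$-orbit, which meets each $T_i$ transversally, is rerouted through the knot complement and the central class $z$ is identified with a meridian $\mu_K$ of $K$. Since $\mu_K$ is noncentral in the nonabelian knot group, a van Kampen computation should show that $z$ ceases to be central in $\pi_1(X_n)$, collapsing the center to a finite group. The genuine difficulty is to carry this out while simultaneously keeping the surgery $\Z_n$-equivariant and keeping the Seiberg-Witten formula valid: the same torus configuration must permit the cyclic symmetry, force the centrality-destroying meridian identification, and remain homologically standard. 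Once the amalgamated-product/HNN structure of $\pi_1(X_n)$ is pinned down precisely enough to conclude that the center is finite, the obstruction theorem rules out all smooth $\s^1$-actions and the construction is complete.
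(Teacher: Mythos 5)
Your overall strategy coincides with the paper's: start from the Kodaira--Thurston manifold, perform $\Z_n$-equivariant Fintushel--Stern knot surgery on $n$ parallel $T^2$-fibers using a nontrivial knot with trivial Alexander polynomial, and exclude circle actions via the infinite-center obstruction (Theorem 1.3). The homological and Seiberg--Witten bookkeeping is essentially right, although your proposed fix for the non-simply-connected setting is off in detail: the fiber torus here is \emph{not} $c$-embedded, and the paper instead invokes the fact (Remark (1) after Theorem 3.1) that $c$-embeddedness can be removed from the knot surgery formula altogether. The genuine gap is in the crux step, the finiteness of the center of $\pi_1(X_n)$, and it is twofold. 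First, your gluing prescription ``identify the central class $z$ with a meridian $\mu_K$'' is not realizable: $z$ is the orbit class of the circle containing your $\Z_n$ (the $S^1$-factor of $X=S^1\times N^3$), and that loop is transverse to the surgery tori, hence does not lie on the boundary $3$-torus $\partial Nd(T_i)$. A knot-surgery gluing can only match the meridian $m$ of $T_i$ and push-offs of curves lying on $T_i$ itself --- namely the fiber class $h$ of the \emph{other} circle action (the $x$-translations) and the $y$-direction loop $l$ --- with $\lambda_K$, $\mu_K$ and $\{pt\}\times\s^1$. The paper glues $h\mapsto \mu_K$; you have conflated the two distinct circle actions on the Kodaira--Thurston manifold, the one giving the $\Z_n$-symmetry and the one whose orbits lie inside the fibers.

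Second, even granting that one central element ceases to be central, your conclusion ``collapsing the center to a finite group'' does not follow: $z(\pi_1(X))\cong \Z^2$, generated by the $S^1$-factor class \emph{and} the fiber class $h$, and nothing in your argument controls the remaining central elements or excludes new ones. The paper bounds the entire center in one stroke using the amalgamated free product structure. This requires knowing that $\pi_1(\partial X_1)\rightarrow \pi_1(X_1)$ is injective (Lemma 3.3 --- itself nontrivial, proved via the Loop Theorem together with the non-separating-sphere criterion of Corollary 2.4, so it \emph{uses} the hypothesis $\underline{SW}_X\neq 0$); granting this, $z(\pi_1(X_K))\subseteq z\bigl(\pi_1((\s^3\setminus Nd(K))\times\s^1)\bigr)=\langle[\{pt\}\times\s^1]\rangle$, where triviality of the knot group's center is exactly where ``$K$ is not a torus knot'' enters. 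The decisive gluing condition, absent from your proposal, therefore concerns $\{pt\}\times\s^1$: it must be glued to a push-off of $l$, no nontrivial power of which is central in $\pi_1(X)$ (condition (ii) of Section 3); this is what forces $z(\pi_1(X_K))$ to be trivial. Finally, for the equivariant construction the paper sidesteps the $n$-edge graph-of-groups computation you would face by observing that surgery on $n$ parallel copies of $T$ equals a single knot surgery with the connected sum $K\#\cdots\# K$, which again has trivial Alexander polynomial and is not a torus knot, so Theorem 3.2 applies verbatim to give both the non-existence of smooth $\s^1$-actions and the invariance of the Seiberg--Witten invariant.
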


\noindent{\bf Remarks}
(1) An analogous question in the locally linear category was considered by Kwasik in \cite{K}
where it was shown that the fake $\C\P^2$ admits a locally linear $\Z_p$-action for every odd $p$
but supports no locally linear $\Z_2$-actions; in particular, it supports no locally linear $\s^1$-actions. 
Edmonds \cite{Ed} extended the construction to all simply-connected $4$-manifolds, among
which those with even intersection form and nonzero signature do not admit any locally linear 
$\s^1$-actions by work of Fintushel \cite{F1}. So no Hurwitz-type bound exists for
locally linear actions in dimension four. 

(2) In higher dimensions, there are examples of manifolds which support no smooth $\s^1$-actions 
but admits smooth $\Z_p$-actions for infinitely many primes $p$. Indeed, Assadi and Burghelea showed 
in \cite{AB} that for any exotic $n$-sphere $\Sigma^n$, the connected sum with the $n$-torus, 
$\Sigma^n\# T^n$, does not support any smooth $\s^1$-actions. Since the group of homotopy 
$n$-spheres is of finite order, it follows easily that, by taking connected sum equivariantly with 
respect to some natural $\Z_p$-action on $T^n$, one obtains such examples. 

(3) In dimension three, a similar question was asked by Giffen and Thurston \cite{Kirby}
as whether, for a closed $3$-manifold $M$, the order of finite subgroups of $\text{Diff } M$
is bounded so that it contains no infinite torsion subgroups, unless $M$ admits an $\s^1$-action.
Kojima \cite{Kojima} answered this question affirmatively, modulo the Geometrization Conjecture
of Thurston which is now resolved \cite{BLP, P}.  A key point was the work of Freedman and Yau 
on homotopically trivial symmetries of Haken manifolds \cite{FY}.

\vspace{3mm}

The $4$-manifolds $X_n$ in Theorem 1.1 were obtained by performing Fintushel-Stern knot
surgery \cite{FinS} on the Kodaira-Thurston manifold $X$, which is a smooth $T^2$-manifold. 
The idea is to kill all the smooth $\s^1$-actions on $X$ while allowing a finite symmetry to survive by performing the knot surgery equivariantly with respect to some natural $\Z_n$-action on $X$. 
Suppressing the equivariant aspect of this construction, one may consider more generally the following problem.

\vspace{3mm}

{\it Problem 2: For any given smooth $\s^1$-four-manifold $X$, perform surgeries on $X$ so that
the surgery manifold does not support any smooth $\s^1$-actions while retain the topological
or smooth invariants of $X$ as much as possible. 
}

\vspace{3mm}

In this regard, we have the following theorem.

\begin{theorem}
Let $X$ be a smooth $\s^1$-four-manifold with $b_2^{+}>1$ and nonzero Seiberg-Witten
invariant, such that $H^2(X;\Z)$ has no $2$-torsions. Then there exists a Fintushel-Stern 
knot surgery manifold of $X$ which retains the integral homology, intersection form, and 
Seiberg-Witten invariant, but admits no smooth $\s^1$-actions. 
\end{theorem}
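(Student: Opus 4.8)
The plan is to combine two ingredients: the fundamental-group obstruction derived earlier in the paper — a smooth $\s^1$-four-manifold with nonzero Seiberg-Witten invariant has fundamental group of infinite center — and the control that Fintushel-Stern knot surgery affords over the Seiberg-Witten invariant, the homology, and the fundamental group. Contrapositively, the obstruction reduces the theorem to the following task: produce a Fintushel-Stern knot surgery manifold $X_K$ of $X$ whose Seiberg-Witten invariant agrees with that of $X$ (hence is nonzero) and whose integral homology and intersection form agree with those of $X$, but for which the center of $\pi_1(X_K)$ is finite. Once this is achieved, $X_K$ can support no smooth $\s^1$-action, since such an action together with $SW(X_K)\neq 0$ would force an infinite center.

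First I would fix the homological and gauge-theoretic bookkeeping. Knot surgery is performed along an embedded torus $T\subset X$ of self-intersection zero; because $S^1\times(S^3\setminus N(K))$ has the same integral homology as $T^2\times D^2$ and the gluing matches meridians, the integral homology and intersection form of $X_K$ coincide with those of $X$ for any knot $K$. To preserve the Seiberg-Witten invariant I would take $T$ homologically essential and choose $K$ to be a nontrivial knot with trivial Alexander polynomial $\Delta_K(t)=1$; the Fintushel-Stern product formula $SW_{X_K}=SW_X\cdot\Delta_K=SW_X$ (valid since $b_2^{+}>1$) then gives $SW(X_K)=SW(X)\neq 0$. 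The hypothesis that $H^2(X;\Z)$ has no $2$-torsion is what allows me to identify the Spin$^c$ structures and transport the Seiberg-Witten invariant of $X$ to $X_K$ without ambiguity. Crucially, I would further require $K$ to be a \emph{non-torus} knot, for instance an untwisted Whitehead double of a nontrivial knot (which has $\Delta_K=1$), so that the knot group $\pi_1(S^3\setminus N(K))$ has trivial center.

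The heart of the argument is the fundamental group. Since $SW(X)\neq 0$ and $X$ carries an $\s^1$-action, the action is fixed-point free and saturates $X$ by circle orbits over the quotient three-orbifold, the orbit class $z$ generating an infinite cyclic central subgroup of $\pi_1(X)$ — this is precisely the structure behind the infinite-center obstruction. I would choose $T$ to be an orbit-saturated, homologically essential torus, namely the preimage of a suitable circle in the quotient, so that $z$ is one of the two longitudinal directions of $T$; and I would choose the surgery gluing so that $z$ is identified with the longitude $\ell$ of $K$, while the meridian of $T$ still maps to the meridian of $K$ (this last condition preserves homological essentiality and the validity of the product formula). A van Kampen computation then yields $\pi_1(X_K)\cong\pi_1(X\setminus T)\ast_{\pi_1(T^3)}\bigl(\Z\times\pi_1(S^3\setminus N(K))\bigr)$, in which the formerly central element $z$ becomes the knot longitude $\ell$. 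Because $\ell$ is not central in the knot group of a non-torus knot, $z$ is no longer central; and I would upgrade this to the statement that the \emph{entire} center of the amalgamated product is finite, using that the center of $\pi_1(S^3\setminus N(K))$ is trivial and that the amalgamating subgroup $\pi_1(T^3)=\Z^3$ embeds with infinite index, so that no element of infinite order can remain central.

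The main obstacle I anticipate is exactly this last center computation. Two points require care. First, the existence of an orbit-saturated, homologically essential torus $T$ on which the central orbit class $z$ is a primitive longitudinal curve, which I would extract from the structure theory of fixed-point-free $\s^1$-manifolds with nonzero Seiberg-Witten invariant. Second, the purely group-theoretic step showing that $\pi_1(X_K)$, presented as the above amalgamated free product over $\Z^3$, has finite center. The homological and Seiberg-Witten parts are routine once $K$ and $T$ are fixed; it is the interaction between the homotopy class of the $\s^1$-orbit and the knot group, and the resulting collapse of the center, that carries the real content of the theorem.
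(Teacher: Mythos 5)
Your overall strategy --- reduce the theorem, via the infinite-center obstruction of Theorem 1.3, to producing a knot surgery manifold with the same homology, intersection form and Seiberg--Witten invariant but with finite center of $\pi_1$ --- is exactly the paper's. But the mechanism you propose for killing the center conflicts with the definition of knot surgery. Fintushel--Stern surgery \emph{requires} the gluing $\phi$ to send the meridian of $T$ to the longitude of $K$; you instead send the meridian of $T$ to the meridian of $K$ and the central orbit class $z$ to the longitude. That is a different torus surgery: the result is not a ``Fintushel--Stern knot surgery manifold'' as the theorem demands; the product formula $\underline{SW}_{X_K}=\underline{SW}_X\cdot\Delta_K(t)$ is proved only for the standard gluing, so your parenthetical claim that your gluing ``preserves the validity of the product formula'' is unsupported; and homology is \emph{not} preserved, because the longitude bounds in the knot complement and so your gluing kills $[z]$ in $H_1$ --- concretely, for $X=T^4$ (which satisfies every hypothesis of the theorem) a Mayer--Vietoris computation gives $b_1=3$ for your surgered manifold. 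Under the genuine knot surgery gluing, $z$ cannot be placed on the longitude at all: it must go to either the meridian $\mu_K$ or to $\{pt\}\times\s^1$, and since $z$ remains central in $\pi_1(X\setminus T)$, sending it to $\{pt\}\times\s^1$ leaves it central in the amalgam. The paper's mechanism is therefore the opposite of yours: glue the fiber class $z$ to the \emph{meridian} of $K$, and glue a section $l$ of the $\s^1$-fibration over an embedded loop to $\{pt\}\times\s^1$, having chosen $l$ so that no nontrivial power of $[l]$ is central in $\pi_1(X)$. Then the center of $\pi_1(X\setminus T)\ast_{\Z^3}(\Z\times G_K)$, which lies in the amalgamating $\Z^3$ and must be central in both factors (this is the correct group-theoretic fact, not your ``infinite index'' principle), is contained in $\langle[\{pt\}\times\s^1]\rangle=\langle l'\rangle$ and hence is trivial.

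The second gap is that a single surgery cannot work in general, and your proposal has no mechanism for the cases where $\mathrm{rank}\,z(\pi_1(X))\geq 2$. The loop $l$ with no central powers need not exist: for $X=T^4$ every loop is central, and indeed any single knot surgery on $T^4$ produces a manifold of the form $\s^1\times M$, whose center is still infinite, so no choice of torus, knot, or admissible gluing helps. The paper handles this by a case analysis on the rank of the center, invoking the classification of fixed-point-free smooth $\s^1$-four-manifolds from \cite{C2} (diffeomorphic to $T^4$, to $T^2\times\s^2$, or to $\s^1\times N^3/G$), and in the $T^4$ case performs \emph{two} successive knot surgeries --- the first producing $\s^1\times M$ with $z(\pi_1(M))=0$, hence rank-one center, the second killing the center as in the rank-one case. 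This is precisely why the theorem speaks of a manifold obtained by a finite series of knot surgeries. Your single-surgery plan, even with the gluing corrected, cannot cover this case; the existence of the required torus and section, which you flag as a point to be ``extracted from the structure theory,'' genuinely fails there.
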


Here we call a $4$-manifold a {\it Fintushel-Stern knot surgery manifold} 
of $X$ if it is obtained by a finite series of knot surgeries starting from $X$. 

\vspace{3mm}

\noindent{\bf Remarks}
(1) By work of Pao \cite{Pao}, Fintushel \cite{F2}, and independently Yoshida \cite{Y} (modulo
the $3$-dimensional Poincar\'{e} conjecture which is now resolved by work of Perelman \cite{P}),
a simply-connected, smooth $\s^1$-four-manifold has standard smooth structure, i.e., it is 
diffeomorphic to a connected sum of $\s^4$, $\pm \C\P^2$, or $\s^2\times \s^2$. With this
understood, any surgery which changes the smooth structure of such a manifold will kill all 
the smooth $\s^1$-actions but retain the homeomorphism type of the manifold. However, such 
a procedure will change the Seiberg-Witten invariant in general.

(2) Kotschick in \cite{Ko2} constructed examples of pairs of smooth, homeomorphic but 
non-diffeomorphic $4$-manifolds, where one admits a smooth $\s^1$-action and the other does 
not support any smooth $\s^1$-actions. Kotschick's examples are homeomorphic to 
$k(\s^2\times \s^2)\# (k+1)(\s^1\times \s^3)$ and all have vanishing Seiberg-Witten invariant. 
The non-existence of smooth $\s^1$-actions was based on the existence of monopole classes 
$c$ with $c^2>0$, which is a smooth invariant finer than Seiberg-Witten basic class. (Kotschick only 
showed non-existence of fixed-point free actions in \cite{Ko2}; to rule out actions with fixed-points, 
one appeals to Theorem 2.1 of Baldridge \cite{Bald3}, cf. Kotschick \cite{Ko3}.)

(3) Freedman and Meeks \cite{FMk} derived a certain obstruction for existence of smooth
$\s^1$-actions in terms of certain properties of the de Rham cohomology. For instance, their 
obstruction implies that for any smooth $n$-manifold $M^n$ which is not a homotopy $n$-sphere, 
the connected sum $M^n\# T^n$ does not admit any smooth $\s^1$-actions.

\vspace{3mm}

The construction in Theorem 1.2 relies on a new obstruction for existence of smooth
$\s^1$-actions on a $4$-manifold, which is stated below.

\begin{theorem}
Let $X$ be a smooth $\s^1$-four-manifold with $b_2^{+}>1$ and nonzero Seiberg-Witten
invariant. Then the homotopy class of the principal orbits of any smooth $\s^1$-action on $X$
must have infinite order. In particular, the center of $\pi_1(X)$ is infinite.
\end{theorem}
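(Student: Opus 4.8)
The plan is to combine the classical fact that the orbit class of a circle action is central with Baldridge's Seiberg--Witten computations for circle actions, thereby reducing the whole question to a homotopy-theoretic statement about the orbit space. First I would fix a principal orbit through a basepoint $x_0$ and let $h\in\pi_1(X,x_0)$ be the class of the orbit loop $t\mapsto t\cdot x_0$. Restricting the orbit map $a\colon \s^1\times X\to X$ to $\s^1\times\gamma$ for an arbitrary loop $\gamma$ at $x_0$ gives a map of the torus whose two boundary circles realize $h\gamma$ and $\gamma h$; hence $h$ commutes with everything, i.e. $h$ lies in the first Gottlieb subgroup $G_1(X)\subseteq Z(\pi_1(X))$. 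With this in hand the final sentence is free: as soon as $h$ has infinite order, $\langle h\rangle\cong\Z$ is an infinite subgroup of the center.

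Next I would rule out fixed points. If $p$ were a fixed point, the linear isotropy model on a ball about $p$ contracts a nearby principal orbit to $p$ through orbits of shrinking radius, so $h$ would be null-homotopic, contradicting the conclusion we are after. The efficient way to exclude this is to invoke Theorem~2.1 of Baldridge, which says that a smooth $\s^1$-action with nonempty fixed-point set on a $4$-manifold with $b_2^{+}>1$ forces the Seiberg--Witten invariants to vanish; since $SW(X)\neq 0$, the action is fixed-point free. Because every proper closed subgroup of $\s^1$ is finite, the action is then locally free, and the orbit space $Y$ is a closed oriented $3$-orbifold over which $X$ is a Seifert (orbifold $\s^1$-) bundle.

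The core is the infinite-order claim, which I would prove by contradiction: assume $h$ has finite order. Through the homotopy exact sequence of the Seifert fibration $\s^1\to X\to Y$, the kernel of $\pi_1(\s^1)=\Z\to\pi_1(X)$ is exactly the image of the spherical classes $\pi_2(Y)$ under the Euler-class pairing, so $h$ has infinite order precisely when that pairing vanishes on every essential $2$-sphere in $Y$. Finiteness of $h$ thus manufactures an essential embedded sphere $S\subset Y$ with nonzero Euler number; by the sphere theorem $Y$ is reducible, with a summand on which the $3$-dimensional Seiberg--Witten invariant vanishes (concretely, the preimage of $S$ is an $\s^1$-bundle of nonzero Euler number over $\s^2$, producing a positive-scalar-curvature reduction). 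Feeding this into Baldridge's Seiberg--Witten computation for locally free circle actions, which expresses $SW(X)$ through the $3$-dimensional invariants of $Y$ and is available precisely because $b_2^{+}(X)>1$, forces $SW(X)=0$, the desired contradiction.

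The hard part is this last step, and the subtlety I would emphasize is that the dichotomy \emph{finite versus infinite order of $h$} is invisible to the rational homology of $X$. For the Kodaira--Thurston manifold with the circle action generated by the center of its Heisenberg factor, $h$ is null-homologous in $H_1(X;\Q)$ yet has infinite order in $\pi_1(X)$, while $b_2^{+}=2$ and $SW\neq 0$; so no homological Seiberg--Witten vanishing argument applied directly to $X$ can settle the question. One is therefore forced to transport the nonvanishing of $SW(X)$ onto the $3$-dimensional quotient $Y$ via Baldridge's fibered formula and then extract the homotopy constraint — vanishing of the Euler pairing on $\pi_2(Y)$ — on the orbit space itself. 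Making this transport rigorous in the orbifold (Seifert) setting, and in particular handling the exceptional fibers correctly, is where I expect the genuine technical work to lie.
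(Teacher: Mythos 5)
Your overall strategy is the same as the paper's: centrality of the orbit class (the Gottlieb-type argument), Baldridge's fixed-point vanishing theorem to reduce to fixed-point free actions, the homotopy exact sequence of the resulting Seifert-type fibration $\pi\colon X\to Y$, and then a contradiction obtained by converting finiteness of $h$ into a $2$-sphere in $Y$ with nonzero Euler pairing and feeding it into Baldridge's fibered Seiberg--Witten formula. The reduction steps are correct. But the part you explicitly defer (``where I expect the genuine technical work to lie'') is not a routine verification; it is the actual content of the theorem, and as sketched your two key moves both have genuine gaps. First, you invoke ``the sphere theorem'' on $Y$ to produce an essential embedded sphere. $Y$ is a $3$-orbifold, and no sphere theorem applies to it directly. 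The paper's route is: show $Y$ is pseudo-good (Lemma 2.5 --- itself a Seiberg--Witten argument, not a formality), invoke the Boileau--Leeb--Porti geometrization of $3$-orbifolds together with McCullough--Miller to conclude $Y$ has a finite regular manifold cover $\tilde Y$, apply the Meeks--Yau \emph{equivariant} sphere theorem upstairs, and then run a case analysis on how the deck group acts on the resulting spheres (in particular ruling out orientation-reversing invariance by an Euler-class argument) so that the sphere descends to something usable in $Y$. None of this machinery appears in your outline, and without it the sphere you need simply does not exist as stated.

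Second, even granting an embedded non-separating sphere, your vanishing mechanism is wrong or at best unproven. Baldridge's formula expresses $SW_X$ in terms of the \emph{orbifold} Seiberg--Witten invariants of $Y$, not the invariants of the underlying manifold $|Y|$; a non-separating sphere in $|Y|$ (possibly passing through singular points, as it may after descending from $\tilde Y$) gives vanishing of $SW_{|Y|}$, and bridging the gap to $SW_Y=0$ requires the de-singularization formula of \cite{C1} (Theorem 2.3 in the paper), which has no counterpart in your proposal. Your alternative parenthetical --- that the preimage of $S$ in $X$ is an $\s^1$-bundle of nonzero Euler number over $\s^2$ ``producing a positive-scalar-curvature reduction'' --- does not work as stated: since $S$ is non-separating, that preimage is a non-separating lens space in $X$, and cutting a $4$-manifold along a non-separating PSC hypersurface does not by itself force $SW_X=0$ without a substantial additional argument. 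So while your skeleton coincides with the paper's, the three load-bearing ingredients (pseudo-goodness plus orbifold geometrization, the equivariant sphere theorem, and the de-singularization formula) are exactly what is missing.
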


\noindent{\bf Remarks} (1) In fact, the statement in Theorem 1.3 concerning the homotopy class
of principal orbits holds true more generally for locally linear actions. Namely: for a smooth 
$\s^1$-four-manifold $X$ with $b_2^{+}>1$ and nonzero Seiberg-Witten invariant, the homotopy 
class of the principal orbits of any locally linear $\s^1$-action on $X$ must have infinite order.
This generalization is the consequence of two separate results. The first one, which asserts that every 
locally linear $\s^1$-action on $X$ must be fixed-point free, is proved in this paper (cf. Theorem 2.6).
The second one is Theorem 1.6 in \cite{C2} which states that if the $\pi_1$ of
an orientable $4$-manifold has infinite center and the $4$-manifold is not homeomorphic to the 
mapping torus of some periodic diffeomorphism of an elliptic $3$-manifold, the homotopy class of the principal orbits of any locally linear, fixed-point free $\s^1$-action must have infinite order.

(2) A special case of Theorem 1.3, where $X$ is symplectic and the $\s^1$-action is free, was due
to Kotschick \cite{Ko1}. On the other hand, Theorem 1.3 is a strengthening of a result of 
Baldridge (cf. \cite{Bald3}, Theorem 1.1), which states that if a $4$-manifold with $b_2^{+}>1$ 
admits a smooth $\s^1$-action having at least one fixed point, then the Seiberg-Witten invariant 
vanishes for all $Spin^c$-structures.

\vspace{3mm}

In view of the examples in Theorem 1.1, it remains to see whether or not the answer to the following question would turn out to be affirmative (we formulate it here as a problem):

\vspace{3mm}

{\it Problem 3 {\em (}Question 4.4 in \cite{C}, Question 1.5(4) in \cite{C0}{\em)}:

\vspace{1.5mm}

Let $X$ be a simply connected smoothable $4$-manifold with even intersection form and non-zero signature. Prove that there exist a constant $C>0$ depending only on the homeomorphism type of $X$, 
such that for any prime number $p>C$, there are no $\Z_p$-actions on $X$ which are smooth with 
respect to some smooth structure on $X$.
}

\vspace{3mm}
 
The organization of the rest of the paper is as follows. In Section 2,
we consider $4$-manifolds with nonzero Seiberg-Witten invariant which admits a smooth, fixed-point 
free $\s^1$-action, and we show that the homotopy class of the principal orbits of the $\s^1$-action 
must have infinite order (here we are allowing more generally $b_2^{+}\geq 1$, cf. Theorem 2.1). 
Theorem 1.3 follows immediately from this result and the main result in Baldridge \cite{Bald3}.
The main technical ingredients involved in the proof in this section are Baldridge's work \cite{Bald2} 
on the Seiberg-Witten invariants of $4$-manifolds with a smooth fixed-point free $\s^1$-action, a 
de-singularization formula relating the Seiberg-Witten invariant of a $3$-orbifold with that of the 
underlying $3$-manifold \cite{C1}, and the recent work of Boileau, Leeb and Porti \cite{BLP} on the
geometrization of $3$-orbifolds. Section $3$ is devoted to a knot surgery
construction that kills all the smooth $\s^1$-actions on a given $4$-manifold. This naturally leads to
a proof for Theorem 1.2, and an equivariant version of it gives a proof for Theorem 1.1. 

\vspace{2mm}

This paper supersedes arXiv:1103.5681v3 [math.GT], ``Seifert fibered four-manifolds
with nonzero Seiberg-Witten invariant".

\section{New constraints of smooth $\s^1$-four-manifolds}

Recall that for an oriented $4$-manifold $M$ with $b^{+}_2>1$, the 
Seiberg-Witten invariant of $M$ is a map $SW_M$ that assigns to each $Spin^c$-structure $\L$ 
of $M$ an integer $SW_M(\L)$ which depends only on the diffeomorphism class of $M$ (cf. \cite{M}).  
In the case of $b_2^{+}=1$, the definition of $SW_M(\L)$ requires an additional choice of orientation 
of the $1$-dimensional space $H^{2,+}(M,\R)$, as discussed in Taubes \cite{T2}. A convention 
throughout this paper is that when we say $M$ has nonzero Seiberg-Witten invariant in the case of $b_2^{+}=1$, it is meant that the map $SW_M$ has nonzero image in $\Z$ for {\it some} choice of orientation of $H^{2,+}(M,\R)$. Note that under this convention, every symplectic $4$-manifold has
nonzero Seiberg-Witten invariant by the work of Taubes \cite{T1}.

The bulk of this section is devoted to proving the following theorem, which, with Theorem 1.1 in
Baldridge \cite{Bald3}, gives Theorem 1.3 immediately.

\begin{theorem}
Let $X$ be a $4$-manifold with $b_2^{+}\geq 1$ and nonzero Seiberg-Witten invariant. Then the homotopy class of the principal orbits of any smooth, fixed-point free $\s^1$-action on $X$ must have infinite order. 
\end{theorem}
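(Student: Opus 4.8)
The plan is to pass to the orbit space of the action and reduce the statement to a question about the quotient $3$-orbifold, which is then settled by combining Baldridge's dimension-reduction formula with the geometrization of $3$-orbifolds. Since the action is fixed-point free, every isotropy group is a finite cyclic group, so the orbit space $B := X/\s^1$ is a closed oriented $3$-orbifold and the quotient map $\pi\colon X\to B$ is a Seifert $\s^1$-fibration (a principal $\s^1$-orbibundle away from the exceptional orbits). Let $h\in\pi_1(X)$ be the homotopy class of a principal orbit. The orbifold homotopy exact sequence of $\pi$,
$$\pi_2(X)\to\pi_2^{\mathrm{orb}}(B)\xrightarrow{\ \partial\ }\pi_1(\s^1)\to\pi_1(X)\to\pi_1^{\mathrm{orb}}(B)\to 1,$$
shows that $h$ has infinite order precisely when the connecting map $\partial$ vanishes; in particular, if $B$ is aspherical as an orbifold, so that $\pi_2^{\mathrm{orb}}(B)=0$, then $h$ has infinite order. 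Thus it suffices to prove that the nonvanishing of $SW_X$ forces $B$ to be aspherical.

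First I would invoke Baldridge's computation \cite{Bald2} of the Seiberg-Witten invariants of a $4$-manifold carrying a fixed-point free $\s^1$-action. This expresses $SW_X$ in terms of a Seiberg-Witten-type invariant of the quotient $3$-orbifold $B$, the relevant $Spin^c$-structures being those that descend from $X$, so that $SW_X\neq 0$ implies that $B$ has nonzero orbifold Seiberg-Witten invariant. Next I would apply the de-singularization formula of \cite{C1} to pass from $B$ to its underlying $3$-manifold $|B|$, concluding that $|B|$ has nonzero $3$-dimensional Seiberg-Witten invariant. By the Meng-Taubes identification of the latter with the Milnor-Turaev torsion, $|B|$ then has $b_1\geq 1$ and positive Thurston norm; in particular $|B|$ admits no metric of positive scalar curvature.

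It then remains to promote this to asphericity of the orbifold $B$, i.e.\ to rule out any spherical or bad $2$- or $3$-dimensional suborbifolds along the singular locus, since these are exactly what can make $\pi_2^{\mathrm{orb}}(B)$ nontrivial and permit $h$ to be of finite order. This is where the geometrization of $3$-orbifolds \cite{BLP} enters: it provides a geometric decomposition of $B$ whose pieces can be matched against the constraint imposed by the nonvanishing orbifold Seiberg-Witten invariant, the point being that any spherical piece carries an invariant metric of positive scalar curvature and would force the orbifold invariant, and hence $SW_X$, to vanish. Once $B$ is shown to be a good aspherical orbifold, one has $\pi_2^{\mathrm{orb}}(B)=0$, and the exact sequence above yields that $h$ has infinite order.

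I expect the main obstacle to be precisely this last coordination step: carrying Baldridge's reduction through in the genuinely orbifold setting while keeping track of the exceptional orbits and the Seifert data, correctly transporting the nonvanishing statement through the de-singularization formula, and using geometrization to exclude \emph{all} non-aspherical configurations of the singular locus, including the reducible and $S^2$-bundle type pieces that contribute to $\pi_2^{\mathrm{orb}}$. A secondary technical point is the treatment of the $b_2^{+}=1$ case, where the Seiberg-Witten invariant depends on a chamber and Baldridge's formula must be applied with the correct choice of orientation of $H^{2,+}(X,\R)$.
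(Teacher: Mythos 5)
Your overall skeleton---pass to the orbit $3$-orbifold, observe via the homotopy exact sequence that the orbit class has infinite order exactly when the connecting map $\partial$ has zero image, and then feed Baldridge's formula, the de-singularization formula of \cite{C1}, and geometrization \cite{BLP} into a contradiction---is the same as the paper's. But the statement you reduce everything to, namely that $SW_X\neq 0$ forces the orbit orbifold $B$ to be \emph{aspherical}, is both stronger than what is needed and false. Take $X=\s^1\times\bigl(T^3\# L(p,q)\bigr)$ with the free $\s^1$-action on the first factor: by Meng--Taubes, $SW_X$ is the Milnor--Turaev torsion of $T^3\# L(p,q)$, and the lens-space summand does not annihilate the torsion of $T^3$ but multiplies it by a factor of order $p$ (the maximal free abelian cover has first homology $\Z_p[\Z^3]$, whose order ideal is $(p)$), so $SW_X\neq 0$; yet the orbit space $T^3\# L(p,q)$ is reducible, hence not aspherical. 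The same example exposes the flaw in your proposed mechanism (``a spherical piece of the geometric decomposition carries positive scalar curvature, hence the invariant vanishes''): positive scalar curvature on a piece of a decomposition does not globalize, and a \emph{separating} essential sphere is perfectly compatible with nonvanishing Seiberg--Witten invariant. This is precisely why the paper's Corollary 2.4 is stated only for \emph{non-separating} spheres, and why its proof of Theorem 2.1 is organized entirely around showing that any sphere on which the Euler class of the fibration is nonzero (equivalently, any sphere that could make $\partial$ nonzero) is non-separating; spherical classes with zero Euler number may survive, but they are harmless for the theorem.

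What is missing from your proposal, then, is the machinery that converts ``the connecting map is nonzero'' into an \emph{embedded} non-separating sphere downstairs, to which Corollary 2.4 can be applied. The paper does this in three steps. First, Lemma 2.5 shows $Y$ is pseudo-good: a bad $2$-suborbifold would force the Euler class to be non-torsion, hence $b_1(Y)=b_2^{+}+1>1$, and would produce a non-separating sphere in $|Y|$, contradicting $SW_X\neq 0$. Second, \cite{BLP} combined with McCullough--Miller \cite{MM} upgrades pseudo-good to very good, giving a finite regular \emph{manifold} cover $\tilde Y$ with $Y=\tilde Y/G$; pulling back the circle bundle to $\tilde X\to\tilde Y$ and using injectivity of $\pi_1(\tilde X)\to\pi_1(X)$, it suffices to kill the connecting map upstairs. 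Third, and crucially, the Equivariant Sphere Theorem of Meeks--Yau \cite{MY} produces disjoint embedded spheres $\tilde\Sigma_i\subset\tilde Y$, invariant as a union under $G$, generating $\pi_2(\tilde Y)$ as a $\pi_1(\tilde Y)$-module. If the connecting map were nonzero, some $\tilde\Sigma$ would have nonzero Euler number, hence be non-separating; an analysis of its $G$-stabilizer (which must act orientation-preservingly, again by evaluating the Euler class) shows its image in $Y$ is a non-separating spherical suborbifold, so $|Y|$ contains a non-separating $2$-sphere, contradicting Corollary 2.4. Nothing in your proposal plays the role of this equivariant embedded realization of $\pi_2$, and without it the homotopical information cannot be brought into contact with the Seiberg--Witten vanishing results. (The $b_2^{+}=1$ issue you flag at the end is genuine, but it is resolved inside Lemma 2.2 by Taubes's product formula for essential tori, using $b_1(Y)>1$, rather than by a chamber choice in Baldridge's formula.)
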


Let $X$ be an oriented $4$-manifold equipped with a smooth, fixed-point free $\s^1$-action. The orbit 
map of the $\s^1$-action, $\pi:X\rightarrow Y$, defines a Seifert-type $\s^1$-fibration of the 
$4$-manifold, giving 
the orbit space $Y$ a structure of a closed, oriented $3$-dimensional orbifold whose singular set consists 
of a disjoint union of embedded circles, called {\it singular circles}. 
(Equivalently, the $4$-manifold is the total space of a principal $\s^1$-bundle over the $3$-orbifold.)

The first technical ingredient in the proof of Theorem 2.1 is the work of Baldridge \cite{Bald2}, which 
relates the Seiberg-Witten invariant of the $\s^1$-four-manifold with the base $3$-orbifold. More 
concretely, we will need the following lemma. 

\begin{lemma}
Suppose $X$ has nonzero Seiberg-Witten invariant. Let $\pi:X\rightarrow Y$ be the orbit map of a
smooth, fixed-point free $\s^1$-action on $X$. Then the $3$-orbifold $Y$ has nonzero Seiberg-Witten 
invariant provided that $b_1(Y)>1$. 
\end{lemma}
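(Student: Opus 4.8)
The plan is to deduce the lemma directly from Baldridge's dimensional-reduction formula in \cite{Bald2}, which expresses the Seiberg-Witten invariants of the total space $X$ of a fixed-point free $\s^1$-action in terms of the Seiberg-Witten invariants of the base $3$-orbifold $Y$. Concretely, Baldridge's theorem provides, for each $\text{Spin}^c$-structure $\mathfrak{s}$ on $X$, an identity of the shape
\[
SW_X(\mathfrak{s}) \;=\; \pm\sum_{\mathfrak{t}} SW_Y(\mathfrak{t}),
\]
where the sum runs over those $3$-orbifold $\text{Spin}^c$-structures $\mathfrak{t}$ on $Y$ whose pullback, twisted by the appropriate powers of the fiber line bundle, matches $\mathfrak{s}$. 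Granting this formula, the lemma reduces to a propagation-of-nonvanishing statement, and the remaining work is to confirm that its hypotheses are met and to control the ambiguities in the $3$-dimensional invariant.

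First I would verify the running hypotheses of Baldridge's formula. Applying the rational, orbifold Gysin sequence to the Seifert fibration $\pi:X\to Y$ with orbifold Euler class $e$, one obtains a short exact sequence
\[
0 \longrightarrow H^1(Y;\R) \longrightarrow H^1(X;\R) \longrightarrow \ker\!\bigl(e\cup:H^0(Y;\R)\to H^2(Y;\R)\bigr) \longrightarrow 0,
\]
so that $b_1(X) \geq b_1(Y) > 1$; in particular $b_1(X)\geq 2$, which is the regime in which Baldridge's formula is cleanest. The hypothesis $b_1(Y)>1$ also guarantees that the $3$-orbifold Seiberg-Witten invariant $SW_Y$ is well defined independently of metric and perturbation, with no wall-crossing, so that the phrase ``$Y$ has nonzero Seiberg-Witten invariant'' is unambiguous.

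With the formula in hand the conclusion is immediate. Since $X$ has nonzero Seiberg-Witten invariant, there is a $\text{Spin}^c$-structure $\mathfrak{s}$ with $SW_X(\mathfrak{s})\neq 0$. Only finitely many $3$-orbifold $\text{Spin}^c$-structures carry a nonzero invariant once $b_1(Y)>1$ (the orbifold analogue of finiteness of basic classes), so the right-hand sum is a genuine finite quantity; a sum that is nonzero cannot have all of its summands vanish, hence $SW_Y(\mathfrak{t})\neq 0$ for some $\mathfrak{t}$, which is exactly the assertion.

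I expect the only genuine subtlety to lie in the bookkeeping between the two theories rather than in any analytic estimate. One must confirm that it is the orbifold version of Baldridge's formula that is invoked, allowing nontrivial finite isotropy along the singular circles and not merely a free action, and that the orbifold $\text{Spin}^c$-structure $\mathfrak{t}$ produced on the right-hand side is precisely the object later fed into the de-singularization formula of \cite{C1}. The role of $b_1(Y)>1$ should be emphasized throughout, since for $b_1(Y)=1$ the $3$-dimensional invariant becomes chamber-dependent and the clean propagation argument breaks down; this is exactly why the lemma carries that hypothesis.
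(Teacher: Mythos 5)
Your argument reproduces the paper's proof only in the case $b_2^{+}(X)>1$: there, Baldridge's Theorem C in \cite{Bald2} asserts both that every basic class of $X$ is a pullback $\L=\pi^\ast\L_0$ and that $SW_X(\pi^\ast\L_0)=\sum_{\L^\prime\equiv\L_0\bmod\chi}SW_Y(\L^\prime)$, so nonvanishing propagates downstairs exactly as you say. The genuine gap is the case $b_2^{+}(X)=1$, which the lemma must cover because it feeds into Theorem 2.1, stated for $b_2^{+}\geq 1$; note this case is compatible with the hypothesis $b_1(Y)>1$, since when the Euler class of $\pi$ is non-torsion one has $b_1(Y)=b_2^{+}(X)+1=2$. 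When $b_2^{+}(X)=1$, Baldridge's result (Corollary 25 of \cite{Bald2}) gives the sum formula \emph{only} for $Spin^c$-structures of the form $\L=\pi^\ast\L_0$; it does not assert that $SW_X$ vanishes on structures that are not pullbacks. Your blanket claim that the identity holds ``for each $Spin^c$-structure $\mathfrak{s}$ on $X$'' (so that a non-pullback structure would give an empty, hence zero, sum) overstates what is available in this regime, and that overstatement is precisely the content that needs proof.

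The paper closes this hole with a separate argument that your proposal is missing. Using Baldridge's Theorem 9, pullback classes are characterized, modulo torsion, as rational multiples of the Poincar\'e dual of the torus $T=\pi^{-1}(\gamma)$ for a suitable loop $\gamma$ avoiding the singular set of $Y$. One then takes $N$ to be the boundary $3$-torus of a regular neighborhood of $T$, which is essential and splits $X$ into $X_{+}\cup_N X_{-}$, and invokes Taubes' product formula (Theorem 2.7 of \cite{T2}) to conclude that $c_1(\L)$ of any basic class $\L$ lies, modulo torsion, in the span of the Poincar\'e dual of $T$; only then does one know $\L=\pi^\ast\L_0$, making Corollary 25 applicable. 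Without this step (or a substitute) your proof establishes the lemma only under the stronger hypothesis $b_2^{+}(X)>1$. A smaller point: the dichotomy in Baldridge's work is governed by $b_2^{+}(X)$, not $b_1(X)$, so your Gysin-sequence verification that $b_1(X)\geq 2$ is not the hypothesis that needs checking; the role of $b_1(Y)>1$ is to make the three-dimensional invariant chamber-independent and to put Corollary 25 in force when $b_2^{+}(X)=1$.
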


\begin{proof}
Let $\L$ be a $Spin^c$-structure such that $SW_X(\L)\neq 0$. Consider first the case where 
$b_2^{+}(X)>1$. In this case, by Theorem C in Baldridge \cite{Bald2}, $\L=\pi^\ast\L_0$ for some 
$Spin^c$-structure $\L_0$ on $Y$, and moreover
$$
SW_X(\L)=\sum_{\L^\prime\equiv \L_0\mod{\chi }}SW_Y(\L^\prime),
$$
where $\chi$ stands for the Euler class of $\pi$. It follows
readily that $Y$ has nonzero Seierg-Witten invariant in this case.

When $b_2^{+}(X)=1$ and $b_1(Y)>1$, the above formula continues to hold  (cf. \cite{Bald2}, 
Corollary 25) as long as $\L=\pi^\ast\L_0$. Thus it remains to show that $\L=\pi^\ast\L_0$ 
for some $Spin^c$-structure $\L_0$ on $Y$.

In order to see this, we observe first that $H^2(X;\Z)/Tor$ has rank $2$, and any torsion element of 
$H^2(X;\Z)$ is the $c_1$ of the pull-back of an orbifold complex line bundle on $Y$.
Moreover, there exists an embedded loop $\gamma$ lying in the complement of the singular set 
of $Y$, such that an element of $H^2(X;\Z)$ is the $c_1$ of the pull-back of an 
orbifold complex line bundle on $Y$ if and only if it is a multiple of the Poincar\'{e} dual of 
the $2$-torus $T\equiv \pi^{-1}(\gamma)\subset X$ in $H^2(X;\Z)/Tor$, see Baldridge \cite{Bald2}, 
Theorem 9.  With this understood, $\L=\pi^\ast \L_0$ for some $Spin^c$-structure $\L_0$ on $Y$ 
if and only if $c_1(\L)$ is Poincar\'{e} dual to a multiple of $T$ over $\Q$. 

We shall prove this using the product formula of Seiberg-Witten invariants in Taubes \cite{T2}. To 
this end, we let $N$ be the boundary of a regular neighborhood of $T$ in $X$. Then $N$ is a 
$3$-torus, essential in the sense of \cite{T2}, which splits $X$ into two pieces $X_{+}$, $X_{-}$. 
With this understood, Theorem 2.7 of \cite{T2} asserts that $SW_X(\L)$ can be expressed as a 
sum of products of the Seiberg-Witten invariants of $X_{+}$ and $X_{-}$. In particular, $c_1(\L)$ 
is expressed as a sum of $x$, $y$, where $x,y$ are in the images of $H^2(X_{+},N;\Z)$ and $H^2(X_{-},N;\Z)$ in $H^2(X;\Z)$ respectively. It is easily seen that mod torsion elements both are generated by the Poincar\'{e} dual of $T$. Hence the claim $\L=\pi^\ast \L_0$.

\end{proof}

The second technical ingredient is a formula in \cite{C1}, which relates the Seiberg-Witten invariant
of a $3$-orbifold with that of its underlying $3$-manifold. More precisely, let $Y$ be a closed,
oriented $3$-orbifold with $b_1(Y)>1$, whose singular set consists of a disjoint union of embedded
circles. Consider any singular circle $\gamma$, with multiplicity $\alpha>1$, and let $Y_0$ be the 
$3$-orbifold obtained from $Y$ by changing the multiplicity of $\gamma$ to $1$. The following theorem 
from \cite{C1} relates the Seiberg-Witten invariants of $Y$ and $Y_0$. 

\begin{theorem}
{\em (}De-singularization Formula, \cite{C1}{\em)} 
Let $S_Y,S_{Y_0}$ denote the set of $Spin^c$-structures on $Y,Y_0$ respectively. 
There exists a canonical map $\phi:S_Y\rightarrow S_{Y_0}$, which is surjective and $\alpha$ to $1$,
such that the Seiberg-Witten invariants of $Y$ and $Y_0$ obey the following equations 
$$
SW_Y(\xi)=SW_{Y_0}(\phi(\xi)), \;\; \forall \xi\in S_Y.
$$
\end{theorem}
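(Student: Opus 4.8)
The plan is to split the statement into an essentially combinatorial part---the construction of $\phi$ together with its surjectivity and $\alpha$-to-$1$ property---and an analytic part, the invariance of the monopole count. I would begin by noting that the underlying topological spaces of $Y$ and $Y_0$ coincide, since a neighborhood of $\gamma$ has local model $(\R^2/\Z_\alpha)\times\R$, which is homeomorphic to $\R^3$; thus $Y$ and $Y_0$ differ only in the orbifold structure along $\gamma$ and in the local data carried by a $Spin^c$-structure there. On the local uniformizing chart an orbifold $Spin^c$-structure near $\gamma$ amounts to a lift of the rotation $\Z_\alpha$-action to the spinor bundle, i.e. a character $\chi_j\colon\Z_\alpha\to U(1)$ with $j=0,\dots,\alpha-1$. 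I define $\phi$ by forgetting this character, equivalently by restricting a $Spin^c$-structure to the complement of $\gamma$ and extending it across the now-regular circle in $Y_0$. Because the $\alpha$ characters restrict to one and the same $Spin^c$-structure away from $\gamma$, and every $Spin^c$-structure on $Y_0$ arises in this way, the map $\phi$ is canonical, surjective, and exactly $\alpha$-to-$1$.

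For the analytic part I would use throughout that $b_1(Y)=b_1(Y_0)>1$, so that after a generic metric and perturbation every monopole is irreducible, the moduli space is $0$-dimensional and cut out transversally, and $SW$ is the resulting metric-independent signed count. The idea is to stretch a long cylindrical neck along the $2$-torus $T^2=\partial N(\gamma)$ separating a solid-torus neighborhood of $\gamma$ from the common complement $Y'$. By the Morse--Bott gluing theory for monopoles along $T^2$, as the neck length tends to infinity each solution decomposes into a finite-energy solution on $Y'$ and one on the cap, matching along their common translation-invariant limit, which on $T^2\times\R$ is a reducible flat connection parametrized by the torus of flat $U(1)$-connections on $T^2$ modulo gauge. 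Since $Y'$ is literally the same manifold in the two cases, the formula reduces to comparing the contribution of the singular solid torus cap with that of the smooth one.

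The crux is the behavior of the two caps. I expect to show, as in the smooth case, that the moduli of finite-energy monopoles on either solid torus is rigid and contributes $\pm1$, so that the only effect of the cap is to pin down the limiting flat connection on $T^2$: because the meridian of $N(\gamma)$ bounds an orbifold disk with a cone point of order $\alpha$, an extendable flat connection must have meridional holonomy $e^{2\pi i j/\alpha}$ for the character $\chi_j$, while in $Y_0$ the regular meridian forces holonomy $1$. The key point is then that the relative invariant of the common piece $Y'$, regarded as a function on the torus of admissible $T^2$-limits, is locally constant away from the reducible wall; and precisely because $b_1(Y)>1$ this wall is never met. Hence moving the pinned meridional holonomy from $e^{2\pi i j/\alpha}$ to $1$ stays within a single chamber and leaves the count unchanged, which yields $SW_Y(\xi)=SW_{Y_0}(\phi(\xi))$ simultaneously for all $\alpha$ preimages of $\phi(\xi)$. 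The main obstacle lies exactly here: establishing the rigidity of the singular cap and matching the orientations so that no sign discrepancy is introduced, the role of the cone angle entering only through the holonomy shift just described.

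As an independent check I would route the argument through the Meng--Taubes identification of the three-dimensional $SW$ invariant with Reidemeister--Turaev torsion, extended to the Seifert setting, under which the passage from $Y$ to $Y_0$ is the replacement of a multiplicity-$\alpha$ exceptional fiber by a regular one and the claimed equality becomes an elementary algebraic identity for the torsion.
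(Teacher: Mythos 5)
Before anything else, a point of comparison: the paper does not prove this statement at all. It is Theorem 2.3, imported verbatim from \cite{C1} and used in the paper only as a black box (through Corollary 2.4), so there is no in-paper proof to measure your argument against; it has to stand on its own. Its combinatorial half does not yet define a canonical map. Specifying $\phi$ as ``restrict to the complement of $\gamma$ and extend across the now-regular circle'' is ambiguous: two $Spin^c$-structures on $Y_0$ that become isomorphic on $Y_0\setminus\gamma$ differ by an element of $\ker\left(H^2(Y_0;\Z)\to H^2(Y_0\setminus\gamma;\Z)\right)$, which is the subgroup generated by the Poincar\'{e} dual of $\gamma$; when $[\gamma]$ is non-torsion in $H_1(Y_0;\Z)$ there are infinitely many non-isomorphic extensions, so ``the'' extension does not exist. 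Making $\phi$ canonical requires using the local model along $\gamma$ to pick out a preferred extension (de-singularizing the determinant line by twisting away the isotropy character $\chi_j$, i.e.\ the rounding of the local Seifert data), and the exact $\alpha$-to-$1$ property then needs a Mayer--Vietoris comparison of $H^2_{orb}(Y;\Z)$ with $H^2(Y_0;\Z)$. Neither step is in your write-up, and your assertion that the $\alpha$ local characters ``restrict to one and the same $Spin^c$-structure away from $\gamma$'' is a local statement that does not by itself control the global fibers of $\phi$.

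The analytic half has its gap exactly where the content of the theorem lies, and the one justification you offer there is incorrect. You claim the relative invariant of the common piece $Y'$ is locally constant in the pinned meridional holonomy, ``and precisely because $b_1(Y)>1$ this wall is never met.'' But $b_1(Y)>1$ only excludes reducible solutions on the closed manifold (it is what makes $SW$ well defined without a chamber structure); it says nothing about the degeneration locus for neck-stretching along $T^2$, where \emph{every} flat $U(1)$-connection is reducible and the relevant jumping points are the flat structures whose Dirac operator on $T^2$ has nonzero kernel. Your path of pinned holonomies runs from $e^{2\pi i j/\alpha}$ to $1$, and its endpoint---trivial meridional holonomy---is precisely such a special point, not an interior point of a chamber; so ``staying within a single chamber'' is exactly what fails to be true, or at least what must be proved. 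Nor can local constancy in the pinned boundary data be a general principle: different solid-torus fillings of the same $Y'$ (Dehn surgeries) impose different flat-limit constraints, all passing through the trivial connection, and their Seiberg--Witten invariants genuinely differ, so everything hinges on the behavior at the special point that your argument never analyzes. Together with the cap rigidity and orientation matching, which you explicitly defer (``the main obstacle lies exactly here''), this is the whole theorem, left unproved. Finally, the Meng--Taubes ``independent check'' is circular: it presupposes an orbifold extension of Meng--Taubes, which is essentially the statement in question.
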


Since the singular circles in $Y$ are of co-dimension two, the underlying space $|Y|$ is naturally a
$3$-manifold, which can be obtained from $Y$ by finitely many successive applications of the above procedure. In particular, above theorem implies that if the underlying $3$-manifold $|Y|$ has zero
Seiberg-Witten invariant, e.g., $|Y|$ contains a non-separating $2$-sphere, then so does the 
$3$-orbifold $Y$. Combined with Lemma 2.2, the following corollary follows easily.

\begin{corollary}
Let $\pi:X\rightarrow Y$ be a Seifert-type $\s^1$-fibration where $b_1(Y)>1$. If the underlying 
$3$-manifold $|Y|$ contains a non-separating $2$-sphere, then $X$ has vanishing 
Seiberg-Witten invariant. 
\end{corollary}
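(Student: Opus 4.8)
The plan is to run a short chain of implications that descends from the $4$-manifold $X$ to the underlying $3$-manifold $|Y|$, where vanishing of the Seiberg-Witten invariant is transparent, and then transports this vanishing back up. The substantive analytic input has already been packaged into Lemma 2.2 and Theorem 2.3, so the corollary amounts to bookkeeping together with one standard positive-scalar-curvature fact about $|Y|$.

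First I would invoke the contrapositive of Lemma 2.2. Since $b_1(Y)>1$ by hypothesis, Lemma 2.2 tells us that $SW_X\not\equiv 0$ would force $SW_Y\not\equiv 0$; hence it suffices to prove that the $3$-orbifold $Y$ has vanishing Seiberg-Witten invariant. Next I would reduce vanishing for $Y$ to vanishing for $|Y|$ by iterating the de-singularization formula of Theorem 2.3. Writing $Y=Y^{(0)},Y^{(1)},\dots,Y^{(k)}=|Y|$ for the sequence of $3$-orbifolds obtained by successively lowering the multiplicity of a single singular circle to $1$, each step furnishes a surjective map $\phi_i\colon S_{Y^{(i)}}\to S_{Y^{(i+1)}}$ with $SW_{Y^{(i)}}=SW_{Y^{(i+1)}}\circ\phi_i$. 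Because de-singularization only modifies the orbifold multiplicities and leaves the underlying topological space unchanged, and because the singular set is of codimension two, the rational cohomology is unaffected, so every $Y^{(i)}$ again satisfies $b_1>1$ and Theorem 2.3 applies at each stage. Composing, I obtain a surjection $\Phi=\phi_{k-1}\circ\cdots\circ\phi_0\colon S_Y\to S_{|Y|}$ with $SW_Y(\xi)=SW_{|Y|}(\Phi(\xi))$ for all $\xi\in S_Y$; thus $SW_{|Y|}\equiv 0$ implies $SW_Y\equiv 0$.

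Finally I would establish $SW_{|Y|}\equiv 0$ from the hypothesis that $|Y|$ contains a non-separating $2$-sphere $S$. Such an $S$ admits a loop meeting it transversally in a single point, and a regular neighborhood of their union is $(\s^1\times\s^2)\setminus B^3$ (the orientable model, since $|Y|$ is oriented). Capping off shows that $|Y|$ splits as a connected sum $M\#(\s^1\times\s^2)$. As $\s^1\times\s^2$ carries a metric of positive scalar curvature, so does the connected sum, and the $3$-dimensional Seiberg-Witten invariant therefore vanishes for every $Spin^c$-structure via the usual Weitzenb\"ock argument. Chaining the three steps gives $SW_X\equiv 0$, as desired.

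I do not anticipate a serious obstacle: the only point demanding a little care is verifying that the condition $b_1>1$ persists along the entire de-singularization sequence so that Theorem 2.3 (and, at the top of the chain, Lemma 2.2) may be legitimately invoked. This is immediate once one observes that de-singularization is purely a modification of the orbifold structure with no effect on the underlying space, so that $b_1(Y^{(i)})=b_1(|Y|)=b_1(Y)>1$ throughout.
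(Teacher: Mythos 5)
Your chain of reductions is exactly the paper's: apply the contrapositive of Lemma 2.2 (legitimate, since $b_1(Y)>1$), descend from $Y$ to $|Y|$ by iterating the de-singularization formula of Theorem 2.3, and then quote vanishing of the three-dimensional Seiberg--Witten invariant for a $3$-manifold containing a non-separating $2$-sphere. Your observation that $b_1$ persists along the de-singularization sequence is also correct, since the rational cohomology of an orbifold with codimension-two singular set agrees with that of its underlying space, so Theorem 2.3 may indeed be applied at every stage. The problem lies in your justification of the final vanishing statement --- a fact which the paper simply invokes as known (``e.g., $|Y|$ contains a non-separating $2$-sphere'') but which you attempt to prove.

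Specifically, from the (correct) decomposition $|Y|\cong M\#(\s^1\times\s^2)$ you infer that $|Y|$ carries a metric of positive scalar curvature ``because $\s^1\times\s^2$ does.'' This inference is false: positive scalar curvature does not pass to a connected sum with an \emph{arbitrary} summand. The Gromov--Lawson surgery theorem gives PSC on $M_1\# M_2$ only when both summands carry PSC, and by the Schoen--Yau/Gromov--Lawson structure theorem an orientable PSC $3$-manifold is a connected sum of spherical space forms and copies of $\s^1\times\s^2$; thus, for example, $T^3\#(\s^1\times\s^2)$ admits no PSC metric even though it contains a non-separating sphere. (Were your claim true, every $3$-manifold would acquire PSC after a single such stabilization, which is absurd.) So the global Weitzenb\"ock argument cannot be run, and your proof of $SW_{|Y|}\equiv 0$ collapses at this point. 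The conclusion itself is true and standard; correct substitutes include: (i) citing it as a known fact, as the paper does; (ii) invoking Meng--Taubes, which identifies the $3$-dimensional invariant with Milnor--Turaev torsion, and this torsion vanishes for $M\#(\s^1\times\s^2)$ for the same algebraic reason that split links have zero Alexander polynomial; or (iii) observing that $\{pt\}\times S$ is a non-torsion embedded sphere of self-intersection zero in $\s^1\times|Y|$, a $4$-manifold with $b_2^{+}=b_1(|Y|)>1$, so the theorem of Fintushel--Stern \cite{FS} (the same one used in the paper's proof of Theorem 2.6) forces $SW_{\s^1\times|Y|}\equiv 0$, and hence $SW_{|Y|}\equiv 0$ by the trivial-Euler-class case of Baldridge's formula underlying Lemma 2.2.
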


Recall that a $3$-orbifold is called {\it pseudo-good} if it contains no bad $2$-suborbifold. Our third
technical ingredient is the recent resolution of Thurston's Geometrization Conjecture for $3$-orbifolds
due to Boileau, Leeb and Porti \cite{BLP}. In particular, when combined with a result of McCullough 
and Miller \cite{MM}, their theorem implies that every pseudo-good $3$-orbifold admits a finite, regular manifold cover. The following lemma shows that the $3$-orbifolds arising in the present context are 
pseudo-good. 

\begin{lemma}
Let $X$ be a $4$-manifold with $b_2^{+}\geq 1$ which admits a smooth, fixed-point free $\s^1$-action,
and let $\pi:X\rightarrow Y$ be the corresponding Seifert-type $\s^1$-fibration. Then $Y$ is pseudo-good 
if either the Euler class of $\pi$ is torsion, or $X$ has nonzero Seiberg-Witten invariant.
\end{lemma}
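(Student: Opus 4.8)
The plan is to argue by contradiction: assuming that $Y$ contains a bad $2$-suborbifold $\Sigma$, I would derive a contradiction under each of the two hypotheses. Recall that a closed bad $2$-orbifold is either a teardrop $S^2(p)$ with $p\geq 2$ or a spindle $S^2(p,q)$ with $p\neq q$, so in either case $|\Sigma|$ is a $2$-sphere whose cone points sit on singular circles of $Y$ of the corresponding multiplicities. My first observation would be purely topological: in either case $|\Sigma|$ meets some singular circle $\gamma$ transversally in exactly one point (for the teardrop this is the unique cone point; for a bad spindle the two cone points have distinct orders and hence lie on \emph{distinct} singular circles, so each is met once). Consequently the algebraic intersection of $|\Sigma|$ with $\gamma$ is $\pm 1$, so $[\,|\Sigma|\,]\neq 0$ in $H_2(|Y|;\Q)$ and $|\Sigma|$ is a non-separating $2$-sphere in $|Y|$.

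The second ingredient is a number-theoretic computation of the orbifold Euler number of the restricted circle fibration $\pi^{-1}(\Sigma)\to\Sigma$. Since $X$ is smooth, over each singular circle the Seifert data $(\alpha_i,\beta_i)$ is coprime, and the Euler number equals, up to sign, $\langle \chi|_\Sigma,[\Sigma]\rangle = b+\sum_i \beta_i/\alpha_i$. A direct congruence argument shows this can never vanish for a bad orbifold: for the teardrop $bp+\beta=0$ would force $p\mid\beta$, contradicting $\gcd(p,\beta)=1$ with $p\geq 2$, while for the spindle reducing $bpq+\beta_1 q+\beta_2 p=0$ modulo $p$ and modulo $q$ forces $p\mid q$ and $q\mid p$, i.e. $p=q$. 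Hence $\chi|_\Sigma$, and therefore $\chi$ itself, must be non-torsion. This already disposes of the first hypothesis: if $\chi$ is torsion there can be no bad $2$-suborbifold at all.

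For the remaining hypothesis---$X$ has nonzero Seiberg-Witten invariant---I would use the Gysin sequence of the orbifold circle fibration over $\Q$ to turn the non-torsion conclusion above into the input needed for Corollary 2.4. Writing $b=b_1(Y)=b_2(Y)$, the subspace $\pi^\ast H^2(Y)\subset H^2(X;\Q)$ is isotropic because $H^4(Y)=0$; in fact it is Lagrangian, so the intersection form is hyperbolic and $b_2^{+}(X)=b_2^{-}(X)$. Reading off ranks from the sequence when $\chi$ is non-torsion gives $b_2^{+}(X)=b-1$, and since $b_2^{+}(X)\geq 1$ by hypothesis we conclude $b_1(Y)>1$. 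Now the first step supplies a non-separating $2$-sphere in $|Y|$ while $b_1(Y)>1$, so Corollary 2.4 forces $SW_X=0$, contradicting the hypothesis. Either way no bad $2$-suborbifold exists and $Y$ is pseudo-good.

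The step I expect to require the most care is the Gysin/Betti-number bookkeeping yielding $b_2^{+}(X)=b_1(Y)-1$ in the non-torsion case: one must verify in the orbifold setting that $\pi^\ast H^2(Y)$ is genuinely Lagrangian (equivalently that $\sigma(X)=0$) and identify the transgression part of $H^2(X;\Q)$ with $\ker\big(\cup\chi\colon H^1(Y)\to H^3(Y)\big)$. This is the linchpin that lets the two disjoint-looking hypotheses be handled uniformly, reducing the ``nonzero Seiberg-Witten'' case to Corollary 2.4 and the ``torsion Euler class'' case to the elementary Euler-number computation.
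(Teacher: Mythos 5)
Your proof is correct and follows essentially the same route as the paper's: assuming a bad $2$-suborbifold exists, a congruence argument on the orbifold Euler number over the bad sphere forces the Euler class of $\pi$ to be non-torsion, and then the non-separating $2$-sphere in $|Y|$ together with $b_1(Y)=b_2^{+}(X)+1>1$ contradicts Corollary 2.4 when the Seiberg-Witten invariant is nonzero. The only differences are cosmetic: where you re-derive $b_1(Y)=b_2^{+}(X)+1$ via the Gysin sequence, the paper simply cites Baldridge, and where you justify non-separation by the odd intersection number with a singular circle, the paper states it as clear.
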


\begin{proof}
We shall show that if $Y$ has a bad $2$-suborbifold $\Sigma$, then the Euler class of 
$\pi$ must be non-torsion and $X$ has vanishing Seiberg-Witten invariant. By definition, 
$\Sigma$ is an embedded $2$-sphere in the underlying $3$-manifold $|Y|$, such that either 
$\Sigma$ contains exactly one singular point of $Y$ or $\Sigma$ contains two singular points of different multiplicities. For simplicity, we let $p_1,p_2$ be the singular points on $\Sigma$, with 
$p_1, p_2$ contained in the singular circles $\gamma_1, \gamma_2$ of multiplicities 
$\alpha_1,\alpha_2$ respectively. We assume that $\alpha_1<\alpha_2$, 
with $\alpha_1=1$ representing the case where $\Sigma$ contains only one singular point 
$p_2$. Note that since $\alpha_1\neq \alpha_2$, $\gamma_1, \gamma_2$ are distinct.
 
We first show that the Euler class of $\pi$ must be non-torsion. Let $e(\pi)$ denote the image of
the Euler class of $\pi$ in $H^2(|Y|;\Q)$.
Then 
$$
\int_{|\Sigma|} e(\pi)=b+\beta_1/\alpha_1+\beta_2/\alpha_2, 
$$
where $b\in\Z, 1\leq \beta_1<\alpha_1$ or $\alpha_1=\beta_1=1$,  and 
$1\leq \beta_2<\alpha_2$ with $\text{gcd }(\alpha_2,\beta_2)=1$. 
It follows from the fact that $\alpha_1<\alpha_2$ and 
$\alpha_2, \beta_2$ are co-prime that $\int_{|\Sigma|} e(\pi)\neq 0$.
Hence the Euler class of $\pi$ is non-torsion. 

Now note that $b_1(Y)=b_2^{+}+1>1$ since the Euler class of $\pi$ is non-torsion (cf. \cite{Bald2}).
On the other hand, clearly $|\Sigma|$ is a non-separating $2$-sphere in $|Y|$. By Corollary 2.4,
$X$ has vanishing Seiberg-Witten invariant.

\end{proof}

\noindent{\bf Proof of Theorem 2.1}

\vspace{1mm}

We first introduce some notations. Let $\pi: X\rightarrow Y$ be the Seifert-type $\s^1$-fibration 
associated
to the smooth fixed-point free $\s^1$-action on $X$. Let $y_0\in Y$ be a regular point and $F$ be the 
fiber of $\pi$ over $y_0$, and fix a point $x_0\in F$. Denote by $i: F\hookrightarrow X$ the inclusion. 
We shall prove that $i_\ast: \pi_1(F,x_0)\rightarrow \pi_1(X,x_0)$ is injective. 

By Lemma 2.5, $Y$ is pseudo-good. As a corollary of the resolution of Thurston's Geometrization 
Conjecture for $3$-orbifolds (cf. \cite{BLP, MM}), $Y$ is very good, i.e, there is a $3$-manifold 
$\tilde{Y}$ with a finite group action $G$ such that $Y=\tilde{Y}/G$. Let $pr:\tilde{Y}\rightarrow Y$ 
be the quotient map, and let $\tilde{X}$ be the $4$-manifold which is the total space of the 
pull-back circle bundle of $\pi:X\rightarrow Y$ via $pr$.  Then $\tilde{X}$ has a natural free 
$G$-action such that $X=\tilde{X}/G$. We fix a point $\tilde{y}_0\in\tilde{Y}$ in the pre-image 
of $y_0$, and let $\tilde{F}\subset \tilde{X}$ be the fiber over $\tilde{y}_0$. We fix a 
$\tilde{x}_0 \in \tilde{F}$ which is sent to $x_0$ under $\tilde{X}\rightarrow X$.
Then consider the following commutative diagram 
$$
\begin{array}{llllllll}
\rightarrow & \pi_2(\tilde{Y},\tilde{y}_0) & \stackrel{\tilde{\delta}}{\rightarrow} & \pi_1(\tilde{F}, \tilde{x}_0) &
\stackrel{\tilde{i}_\ast}{\rightarrow} & \pi_1(\tilde{X},\tilde{x}_0) & \stackrel{\tilde{\pi}_\ast}{\rightarrow} & 
\pi_1(\tilde{Y},\tilde{y}_0) \rightarrow \\
                  &  \;\;\;\;\;\; \downarrow                    &                                              &   \;\;\;\;\;\;\parallel & 
                   & \;\;\;\;\; \;\downarrow &  & \;\;\;\;\;\;\downarrow \\
\rightarrow & \pi_2^{orb}(Y,y_0) & \stackrel{\delta}{\rightarrow} & \pi_1(F,x_0) &
\stackrel{i_\ast}{\rightarrow} & \pi_1(X,x_0) & \stackrel{\pi_\ast}{\rightarrow} & 
\pi_1^{orb}(Y,y_0) \rightarrow .\\                  
\end{array}
$$
Since $\pi_1(\tilde{X},\tilde{x}_0)\rightarrow \pi_1(X,x_0)$ is injective, it follows that $i_\ast$ is
injective if $\tilde{\delta}$ has zero image.

With the preceding understood, by the Equivariant Sphere Theorem of Meeks and Yau 
(cf. \cite{MY}, p. 480), there are disjoint, embedded $2$-spheres $\tilde{\Sigma}_i$ in $\tilde{Y}$ 
such that the union of $\tilde{\Sigma}_i$ is invariant under the $G$-action and the classes 
of $\tilde{\Sigma}_i$ generate $\pi_2(\tilde{Y})$ as a $\pi_1(\tilde{Y})$-module. Suppose the 
image of $\tilde{\delta}$ is non-zero. Then there is a $\tilde{\Sigma}\in\{\tilde{\Sigma}_i\}$ which
is not in the kernel of $\tilde{\delta}$. It follows that the Euler class of 
$\tilde{\pi}:\tilde{X}\rightarrow \tilde{Y}$ is nonzero on $\tilde{\Sigma}$, and consequently,
$\tilde{\Sigma}$ is a non-separating $2$-sphere in $\tilde{Y}$. It also follows that the Euler class
of $\pi$ is non-torsion, and in this case, we have $b_1(Y)=b_2^{+}+1>1$.

We first consider the case where there are no elements of $G$ which leaves the $2$-sphere 
$\tilde{\Sigma}$ invariant. In this case the image of $\tilde{\Sigma}$ in $Y$ under the quotient 
$\tilde{Y}\rightarrow Y$ is an embedded non-separating $2$-sphere which contains no singular 
points of $Y$.  By Corollary 2.4, $X$ has vanishing Seiberg-Witten invariant, a contradiction.

Now suppose that there are nontrivial elements of $G$ which leave $\tilde{\Sigma}$ invariant. 
We denote by $G_0$ the maximal subgroup of $G$ which leaves $\tilde{\Sigma}$ invariant. We shall 
first argue that the action of $G_0$ on $\tilde{\Sigma}$ is orientation-preserving. Suppose not, and 
let $\tau\in G_0$ be an involution which acts on $\tilde{\Sigma}$ reversing the orientation. Then 
since $\tau$ preserves the Euler class of $\tilde{\pi}$ and reverses the orientation of $\tilde{\Sigma}$, 
the Euler class of $\tilde{\pi}$ evaluates to $0$ on $\tilde{\Sigma}$, which is a contradiction. Hence 
the action of $G_0$ on $\tilde{\Sigma}$ is orientation-preserving.

Let $\Sigma$ be the image of $\tilde{\Sigma}$ in $Y$. Then $\Sigma$ is a non-separating, 
spherical $2$-suborbifold of $Y$. Consequently $|Y|$ contains a non-separating $2$-sphere,
which, by Corollary 2.4, implies that $X$ has vanishing Seiberg-Witten invariant, a contradiction.

This completes the proof of Theorem 2.1.

\vspace{3mm}

We end this section with a theorem concerning locally linear $\s^1$-actions on $X$ (when $b_2^{+}>1$). The result, when combined with Theorem 1.6 in \cite{C2}, implies the generalization of Theorem 1.3 we mentioned in the Introduction. 

\begin{theorem}
Let $X$ be a smooth $\s^1$-four-manifold with $b_2^{+}>1$ and nonzero Seiberg-Witten
invariant. Then every locally linear $\s^1$-action on $X$ is fixed-point free. 
\end{theorem}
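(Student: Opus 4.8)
The plan is to prove the contrapositive: assume a locally linear $\s^1$-action on $X$ has a nonempty fixed-point set, and derive that the Seiberg-Witten invariant must vanish, contradicting the hypothesis. The key structural fact is that for a locally linear circle action, the fixed-point set $X^{\s^1}$ is a disjoint union of isolated points and closed, locally flat $2$-dimensional submanifolds, by the standard local representation theory at a fixed point (the tangential representation at a fixed point splits into two $2$-dimensional real rotation summands, so the fixed locus is even-codimensional). The first step is therefore to recall this local model and record that near each fixed point the action looks like a linear rotation action on $\R^4$ with weights $(a,b)$.

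Next I would reduce to the smooth case via the resolution tools already available in Baldridge's framework. The central obstacle is that the powerful vanishing result I want to invoke---Baldridge's Theorem 1.1 (cited in Remark (2) after Theorem 1.3), which says that a smooth $\s^1$-action with at least one fixed point on a $4$-manifold with $b_2^{+}>1$ forces $SW$ to vanish for all $Spin^c$-structures---is stated for \emph{smooth} actions, whereas here the action is only locally linear. So the heart of the argument is a smoothing or a direct Seiberg-Witten computation adapted to the locally linear setting. The natural route is to show that a locally linear $\s^1$-action with fixed points can be made smooth, or at least that its fixed-point data (the components of $X^{\s^1}$ together with their normal weights) can be arranged so that Baldridge's gluing/excision analysis of the Seiberg-Witten moduli space still applies. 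I expect this to be the main difficulty: controlling the analysis near the fixed set where the action is only topologically, not smoothly, linear.

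With the fixed-point structure in hand, the mechanism for vanishing is the same as in Baldridge's smooth argument and in the philosophy of Fintushel's work: a fixed point (or fixed $2$-sphere) of the circle action forces the existence of an invariant neighborhood on which one can run the localization of the Seiberg-Witten equations, and the presence of a fixed point produces a reducible or otherwise degenerate contribution that, combined with $b_2^{+}>1$ (which kills wall-crossing ambiguities and makes $SW$ a genuine diffeomorphism invariant independent of metric and perturbation), forces every $SW_X(\L)=0$. Concretely, one chooses an $\s^1$-invariant metric and perturbation, so that the moduli space inherits an $\s^1$-action; the fixed points of this action on the moduli space correspond to reducible solutions supported near $X^{\s^1}$, and a dimension/cobordism argument shows the invariant must be zero. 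The role of $b_2^{+}>1$ is exactly to guarantee that this invariant-metric computation is unobstructed and equals the true invariant.

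Finally I would assemble the pieces: since $X$ is assumed to have nonzero Seiberg-Witten invariant and $b_2^{+}>1$, the vanishing conclusion is impossible, so the fixed-point set must be empty, i.e. every locally linear $\s^1$-action on $X$ is fixed-point free. The cleanest exposition, if the smoothing step can be justified, is to state a lemma that a locally linear $\s^1$-action with fixed points on such an $X$ either is equivariantly smoothable in a neighborhood of the fixed set or directly supports the localized Seiberg-Witten vanishing argument, and then quote Baldridge's Theorem 1.1 to finish. I anticipate the genuine technical work lies entirely in bridging \emph{locally linear} to the \emph{smooth} hypotheses of the available vanishing theorems; everything downstream of that reduction is a direct application of the cited results together with the fact that $b_2^{+}>1$ makes $SW$ well-defined independently of auxiliary choices.
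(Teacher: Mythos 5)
Your proposal correctly identifies the crux of the theorem---bridging the gap between the \emph{locally linear} hypothesis and the \emph{smooth} hypotheses of the available Seiberg--Witten vanishing theorems---but it does not close that gap, and both of the bridges you sketch would fail. First, a locally linear $\s^1$-action acts only by homeomorphisms; it need not preserve any smooth structure on $X$, so there is no way to choose an ``$\s^1$-invariant metric and perturbation'': pushing a smooth metric forward by a homeomorphism does not even produce a smooth metric, no averaging is possible, and the Seiberg--Witten moduli space inherits no $\s^1$-action. Second, equivariant smoothing near the fixed set is likewise unavailable: locally linear actions on $4$-manifolds need not be smoothable at all, and even if some equivariant smoothing existed, the resulting smooth structure could differ from the given one, so the hypothesis that $X$ has nonzero Seiberg--Witten invariant (an invariant of the \emph{given} smooth structure) would not transfer. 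Since everything downstream in your plan depends on this unproved reduction, the proposal as it stands is a statement of the difficulty rather than a proof.

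The paper circumvents the issue by never letting the locally linear action interact with the gauge theory. It splits Baldridge's result into two parts. The purely topological part is Theorem 2.1 of \cite{Bald3} (not the gauge-theoretic Theorem 1.1 you invoke): an $\s^1$-action with a fixed point on a $4$-manifold with $b_2^{+}>0$ produces an essential embedded $2$-sphere of non-negative self-intersection; this argument is about the structure of the orbit space and remains valid for locally linear actions, so a fixed point would force the image of $\pi_2(X)\rightarrow H_2(X)$ to contain a non-torsion class. The gauge-theoretic part is then applied only to the given smooth structure together with the given \emph{smooth, fixed-point free} $\s^1$-action on $X$: re-using the equivariant sphere theorem setup from the proof of Theorem 2.1 of the paper, the image of $\pi_2(X)\rightarrow H_2(X)$ is generated by smoothly embedded spheres $C_i$ of self-intersection $0$, and the theorem of Fintushel and Stern \cite{FS} (using $b_2^{+}>1$ and nonzero Seiberg--Witten invariant) forces each $C_i$ to be torsion in $H_2(X)$. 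Hence the image of $\pi_2(X)\rightarrow H_2(X)$ is finite, and by the locally linear version of Baldridge's topological statement, no locally linear $\s^1$-action on $X$ can have a fixed point. The lesson is that the locally linear action must enter only through topology; all Seiberg--Witten input is extracted from the smooth data one already has.
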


\begin{proof}
The proof is based on a generalized version of Theorem 2.1 in Baldridge \cite{Bald3}, which states 
that if a $4$-manifold with $b_2^{+}>0$ admits a smooth $\s^1$-action with at least one fixed point, 
then the manifold contains an essential embedded $2$-sphere of non-negative self-intersection. 
Although Baldridge worked in the smooth category, his arguments remain valid in the locally linear
category. In particular, if the Hurwitz map $\pi_2\rightarrow H_2$ is finite, then every locally linear 
$\s^1$-action is fixed-point free. 

With the preceding understood, we claim that there exist embedded $2$-spheres $C_1,\cdots,C_N$ 
of self-intersection $0$ which generate the image of $\pi_2(X)\rightarrow H_2(X)$. Assume the 
claim momentarily. Then since $X$ has $b_2^{+}>1$ and nonzero Seiberg-Witten invariant, a theorem of Fintushel and Stern (cf. \cite{FS}) asserts that each $C_i$ must be torsion in $H_2(X)$, from which Theorem 2.6 follows. 

To see the existence of $C_1,\cdots,C_N$, we shall continue with the set-up and the notations 
introduced in the proof of Theorem 2.1. With this understood, note that the restriction of $\tilde{\pi}$ 
over each $\tilde{\Sigma}_i$ must be trivial from the proof of Theorem 2.1. We claim that there are 
sections $\Sigma_i$ of $\tilde{\pi}$ over $\tilde{\Sigma}_i$ or a push-off of it, such that no nontrivial 
element of $G$ will leave $\Sigma_i$ invariant. The image of such a $\Sigma_i$ under 
$\tilde{X}\rightarrow X$ is an embedded $2$-sphere of self-intersection $0$,  which will be 
our $C_i$. Since $\tilde{\pi}_\ast: \pi_2(\tilde{X})\rightarrow \pi_2(\tilde{Y})$ is isomorphic and 
$\pi_2(\tilde{X})=\pi_2(X)$, the $2$-spheres $C_i$ generate the image of 
$\pi_2(X)\rightarrow H_2(X)$.

To construct these $\Sigma_i$'s, note that there are three possibilities: (i) no nontrivial element
of $G$ leaves $\Sigma_i$ invariant, (ii) there is a nontrivial $g\in G$ (and no other elements) which
acts on $\Sigma_i$ preserving the orientation, and (iii) there is an involution $\tau\in G$ (and no
other elements) which acts on $\Sigma_i$ reversing the orientation. The existence of $\Sigma_i$
is trivial in case (i). In case (ii), any section of $\tilde{\pi}$ will do because $g$ acts as translations
on the fiber of $\tilde{\pi}$. In case (iii), we take $\Sigma_i$ to be a section of $\tilde{\pi}$ over
a push-off of $\tilde{\Sigma}_i$.  This shows the existence of $\Sigma_i$'s, and the proof of 
Theorem 2.6 is completed.

\end{proof}

\section{Killing smooth $\s^1$-actions and Hurwitz-type bound}

We begin by reviewing the knot surgery and the knot surgery formula for Seiberg-Witten invariants,
following Fintushel-Stern \cite{FinS}. To this end, let $M$ be a smooth $4$-manifold with 
$b_2^{+}>1$, which possesses an essential embedded torus $T$ of self-intersection $0$. Given 
any knot $K\subset\s^3$, the knot surgery of $M$ along $T$ with knot $K$ is a smooth 
$4$-manifold $M_K$ constructed as follows:
$$
M_K\equiv M\setminus Nd(T)\cup_\phi (\s^3\setminus Nd(K))\times \s^1.
$$ 
We remark that in the above definition of $M_K$ the diffeomorphism $\phi$ is required to send 
the meridian of $T$ to the longitude of $K$. As noted in \cite{FinS}, the choice of $\phi$ (up to
isotopy) is not unique, so that $M_K$ may not be uniquely determined in general. 
It follows easily from the Mayer-Vietoris sequence that the integral homology
of $M_K$ is naturally identified with the integral homology of $M$, under which the intersection 
pairings on $M$ and $M_K$ agree. (In \cite{FinS}, it is assumed that $M$ is simply connected, 
$T$ is $c$-embedded, and $\pi_1(M\setminus T)$ is trivial. These assumptions are irrelevant to 
the discussions here.) 

An important aspect of knot surgery is that the Seiberg-Witten invariant of $M_K$ can be computed
from that of $M$ and the Alexander polynomial of $K$, through the so-called knot surgery formula.
In order to state the formula, we let 
$$
\underline{SW}_M=\sum_z (\sum_{c_1(\L)=z} SW_M (\L)) t_z, \;\;\; z\in H^2(M;\Z),
$$
where $t_z\equiv \exp(z)$ is a formal variable, regarded as an element of the group ring 
$\Z H^2(M;\Z)$.

\begin{theorem}
{\em(}Knot Surgery Formula \cite{FinS}{\em)}
With the integral homology of $M$ and $M_K$ naturally identified, one has
$$
\underline{SW}_{M_K}=\underline{SW}_M \cdot \Delta_K(t),
$$
where $t=\exp(2[T])$. Here $\Delta_K(t)$ is the Alexander polynomial of $K$, and $[T]$ stands
for the Poincar\'{e} dual of the $2$-torus $T$. 
\end{theorem}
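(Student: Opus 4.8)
The plan is to compute $\underline{SW}_{M_K}$ by a gluing argument, exploiting the fact that the knot surgery is supported near $T$ and that the two pieces are separated by an essential $3$-torus. Write
$M_K = (M\setminus Nd(T))\cup_{T^3} P_K$, where $P_K=(\s^3\setminus Nd(K))\times\s^1$ and the gluing is along $T^3=\partial Nd(T)=T^2\times\s^1$. Since $T$ has self-intersection zero, this $T^3$ is essential in the sense of Taubes \cite{T2}, so the product formula (Theorem 2.7 of \cite{T2}, already invoked in the proof of Lemma 2.2) applies and expresses $\underline{SW}_{M_K}$, inside the group ring $\Z H^2(M_K;\Z)\cong\Z H^2(M;\Z)$, as a product of the relative Seiberg-Witten invariant of $M\setminus Nd(T)$ and that of the knot piece $P_K$.

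First I would observe that the relative invariant of the side $M\setminus Nd(T)$ is the very same object that appears when one decomposes $M$ itself along $T^3$, since the complement and the gluing data on that side are identical for $M$ and for $M_K$. To pin it down I would apply the same formula to the unknot $U$, for which $\s^3\setminus Nd(U)=D^2\times\s^1$, so $P_U=D^2\times T^2$ caps off the $T^3$ in the standard way and $M_U\cong M$; correspondingly the relative invariant of $P_U$ is the multiplicative unit and the $M\setminus Nd(T)$ contribution is identified with $\underline{SW}_M$. It then remains to show that the relative invariant of $P_K$ equals $\Delta_K(t)$ with $t=\exp(2[T])$, the base case $\Delta_U=1$ fixing the normalization.

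The heart of the matter is this last computation, and it is the step I expect to be the main obstacle. Because $P_K=(\s^3\setminus Nd(K))\times\s^1$ is a product, I would reduce the four-dimensional relative invariant to the three-dimensional Seiberg-Witten theory of the knot exterior $\s^3\setminus Nd(K)$ by dimensional reduction along the $\s^1$-factor, and then invoke the Meng-Taubes identification of the three-dimensional invariant with the Milnor-Turaev torsion, which for a knot exterior recovers the Alexander polynomial in its standard normalization. The delicate points are tracking how the homology classes supported in $P_K$ sit inside $H^2(M_K;\Z)$ so that the polynomial variable is correctly matched, and accounting for the factor of $2$ in $t=\exp(2[T])$, which arises because two $Spin^c$-structures differing by the orbifold line bundle dual to $T$ have first Chern classes differing by $2[T]$. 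Combining the two relative contributions yields $\underline{SW}_{M_K}=\underline{SW}_M\cdot\Delta_K(t)$.

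An alternative route, which is the original argument of Fintushel and Stern \cite{FinS}, avoids the three-dimensional input entirely. One first treats fibered knots, for which $P_K$ fibers over $T^2$ with fiber a Seifert surface and $M_K$ becomes a fiber sum, so that the fiber-sum gluing formula applies and the monic Alexander polynomial, being the characteristic polynomial of the monodromy, emerges directly. One then passes to arbitrary knots by induction on crossing changes: a skein triple $(K_+,K_-,K_0)$ produces four-manifolds whose Seiberg-Witten invariants satisfy a surgery relation mirroring the Conway skein relation $\Delta_{K_+}-\Delta_{K_-}=(t^{1/2}-t^{-1/2})\Delta_{K_0}$, with the unknot case $M_U\cong M$ as the base of the induction. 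In either approach the hypothesis $b_2^{+}>1$ is what guarantees that the Seiberg-Witten invariants and the gluing formulas are unambiguous integers, free of chamber-dependence.
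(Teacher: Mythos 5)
The paper contains no proof of this statement to compare yours against: Theorem 3.1 is quoted from Fintushel--Stern \cite{FinS} as a black box, the only added content being Remark (1), which records that the original hypothesis of $c$-embeddedness of $T$ can be dropped (attributed to private communication with Fintushel \cite{F3}). So your proposal has to be judged as a reconstruction of the literature, and on that score it is a sensible outline: your second route is essentially Fintushel and Stern's actual argument, and your first route (split $M_K$ along the essential $T^3$, apply Taubes's product formula from \cite{T2}, and identify the relative invariant of $(\s^3\setminus Nd(K))\times\s^1$ with $\Delta_K$ via Meng--Taubes/Turaev torsion) is the standard modern derivation. The first route has the additional virtue of never invoking $c$-embeddedness, which is exactly the generality this paper needs, since it applies the formula to tori (e.g.\ the $T^2$-fiber of the Kodaira--Thurston manifold) that are only assumed essential.

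As a standalone proof, however, both routes have concrete gaps beyond the expected outsourcing of deep input. In the skein-theoretic route, resolving a crossing of $K_\pm$ produces a \emph{two-component link} $K_0$, so the induction cannot be run within the class of knots: one needs the surgery construction and formula for links (with several tori), which is precisely why \cite{FinS} treats knots and links together; your sketch is silent on this, so the induction as stated does not close up. In the gluing route, the normalization via the unknot is subtler than you indicate: since $\phi$ is constrained only on the meridian/longitude (the paper itself stresses that $M_K$ is not uniquely determined by $K$), the identification $M_U\cong M$ and the claim that the relative invariant of $D^2\times T^2$ is the multiplicative unit must be checked independently of these choices. Also, Taubes's Theorem 2.7 produces a \emph{sum} of products of relative invariants over matching $Spin^c$-structures on the two sides, and converting that into a single product identity in $\Z H^2(M;\Z)$ with the correct variable requires bookkeeping you only gesture at; the factor $2$ in $t=\exp(2[T])$ is simply the effect of twisting a $Spin^c$-structure by the line bundle with $c_1=PD[T]$ on its first Chern class (the word ``orbifold'' has no role here). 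None of this makes your outline wrong, but it is an outline of two known proofs rather than a proof.
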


\noindent{\bf Remarks:}\hspace{1mm}
(1) The knot surgery formula was originally proved in \cite{FinS} under the assumption that 
$T$ is $c$-embedded. However, the assumption of $c$-embeddedness of $T$ is not essential
in the argument and it may be removed (cf. Fintushel \cite{F3}). 

(2) We note that by Theorem 3.1, $\underline{SW}_M\neq 0$ implies
$\underline{SW}_{M_K}\neq 0$ for any knot 
$K$, and $\underline{SW}_{M_K}=\underline{SW}_M$ if the Alexander polynomial of $K$ is trivial.
Moreover, if $H^2(M;\Z)$ has no $2$-torsions, so does $H^2(M_K;\Z)$, and in this case, $M$ and 
$M_K$ have the same Seiberg-Witten invariant provided that the Alexander polynomial of $K$ is trivial.

\vspace{2mm}

With the preceding understood, suppose we are given a fixed-point free, smooth 
$\s^1$-four-manifold $X$ with $b_2^{+}>1$. Let $\pi:X\rightarrow Y$ be the corresponding
Seifert-type $\s^1$-fibration. Furthermore, we assume that there is an embedded loop 
$l\subset X$ satisfying the following conditions:
\begin{itemize}
\item [{(i)}] $\pi(l)$ is an embedded loop lying in the complement of the singular set of $Y$, 
and $l$ is a section of $\pi$ over $\pi(l)$, 
\item [{(ii)}] no nontrivial powers of the homotopy class of $l$ are contained in the center of $\pi_1(X)$,
\item [{(iii)}] the $2$-torus $T\equiv \pi^{-1}(\pi(l))$ is non-torsion in $H_2(X)$. 
\end{itemize}

Now for any nontrivial knot $K\subset\s^3$, we consider a knot surgery manifold
$$
X_K\equiv X\setminus Nd(T)\cup_\phi (\s^3\setminus Nd(K))\times \s^1,
$$ 
where the diffeomorphism $\phi$ satisfies two further constraints: (a) $\phi$ sends a fiber of $\pi$ 
to the meridian of $K$, and (b) $\phi$ sends a push-off of $l$ (with respect to some framing) to
$\{pt\}\times \s^1$. 

With this understood, we have the following theorem.

\begin{theorem}
Suppose $\underline{SW}_X\neq 0$ and $K$ is not a torus knot. Then $X_K$ 
does not support any smooth $\s^1$-actions.
\end{theorem}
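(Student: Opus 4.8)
The plan is to suppose, for contradiction, that $X_K$ supports a smooth $\s^1$-action, and to derive a contradiction with Theorem 1.3 by showing that the center of $\pi_1(X_K)$ is trivial. First I would check that $X_K$ satisfies the hypotheses of Theorem 1.3. Since knot surgery preserves the integral homology and intersection form, $b_2^{+}(X_K)=b_2^{+}(X)>1$. By the Knot Surgery Formula (Theorem 3.1), $\underline{SW}_{X_K}=\underline{SW}_X\cdot\Delta_K(t)$; as $\Delta_K(t)$ is a nonzero Laurent polynomial and $\underline{SW}_X\neq 0$ by hypothesis, we get $\underline{SW}_{X_K}\neq 0$. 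Hence, were $X_K$ to admit a smooth $\s^1$-action, Theorem 1.3 would force the center of $\pi_1(X_K)$ to be infinite.

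Next I would compute $\pi_1(X_K)$ via van Kampen's theorem applied to the decomposition $X_K=W\cup_{T^3}Z$, where $W=X\setminus Nd(T)$, $Z=(\s^3\setminus Nd(K))\times\s^1$, and $T^3=\partial W=\partial Z$ is the splitting $3$-torus. Writing $G_K=\pi_1(\s^3\setminus Nd(K))$ for the knot group, we have $\pi_1(Z)=G_K\times\Z$, the $\Z$-factor being generated by the class $s$ of $\{pt\}\times\s^1$. The gluing $\phi$ together with conditions (a), (b) and the meridian-longitude convention translate, on $\pi_1(T^3)=\Z^3$, into the identifications: the fiber $F$ of $\pi$ maps to the meridian $m_K$, the push-off of $l$ maps to $s$, and the meridian $\mu_T$ maps to the longitude $\lambda_K$. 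Thus $\pi_1(X_K)=\pi_1(W)*_{\Z^3}(G_K\times\Z)$. I would verify that this is a proper amalgam with $\pi_1$-injective edge group: the peripheral torus of a nontrivial knot is incompressible, so $\Z^3\hookrightarrow G_K\times\Z$, and this inclusion is proper since $G_K$ is nonabelian; on the other side $\pi_1(W)$ surjects onto $\pi_1(X)$, which is nonabelian by condition (ii) (some element fails to commute with $l$), so $\Z^3\neq\pi_1(W)$.

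The heart of the argument is the center computation, which is where the hypothesis that $K$ is not a torus knot enters. By the classical theorem of Burde and Zieschang, a knot group has nontrivial center precisely when the knot is a torus knot; since $K$ is not a torus knot, $Z(G_K)$ is trivial, whence $Z(G_K\times\Z)=Z(G_K)\times\Z=\langle s\rangle$. For a proper amalgamated free product $A*_C B$ the center is contained in $C$ and centralizes both factors, so $Z(\pi_1(X_K))\subseteq Z(\pi_1(W))\cap Z(G_K\times\Z)=Z(\pi_1(W))\cap\langle s\rangle$. Therefore any central element of $\pi_1(X_K)$ is a power $s^k$ that is central in $\pi_1(W)$.

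Finally I would rule out nontrivial such powers using condition (ii). Filling $Nd(T)$ back in identifies $\pi_1(X)$ with the quotient $\pi_1(W)/\langle\langle\mu_T\rangle\rangle$, and under this surjection the push-off of $l$ maps to the homotopy class of $l$; thus $s\mapsto l$. If $s^k$ were central in $\pi_1(W)$ for some $k\neq 0$, its image $l^k$ would be central in $\pi_1(X)$, contradicting condition (ii). Hence $Z(\pi_1(X_K))$ is trivial, contradicting the infinite center forced by Theorem 1.3; consequently $X_K$ supports no smooth $\s^1$-action. I expect the main obstacle to be the topological input underlying the clean amalgam, namely verifying that the splitting torus $T^3$ is incompressible in $W$, so that $\Z^3\hookrightarrow\pi_1(W)$ and the resulting containment $Z(\pi_1(X_K))\subseteq\Z^3$ are valid; once this is in place, the group-theoretic center lemma for amalgams and the reduction to condition (ii) are comparatively routine.
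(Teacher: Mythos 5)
Your overall strategy coincides with the paper's: get $\underline{SW}_{X_K}\neq 0$ from the knot surgery formula, present $\pi_1(X_K)$ as an amalgamated free product over $\pi_1(T^3)$, show its center is trivial using the fact that only torus knot groups have nontrivial center together with condition (ii), and contradict Theorem 1.3. Your center computation is correct and is the same mechanism the paper uses (the paper states it more tersely): any central element lies in $\langle s\rangle$, and a central power $s^k$ would push forward under the surjection $\pi_1(W)\to\pi_1(X)$ to a central power $l^k$, violating (ii).

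However, there is a genuine gap, and it sits exactly where you defer "the main obstacle": the injectivity of $\pi_1(T^3)\to\pi_1(W)$, where $W=X\setminus Nd(T)$. This is not a routine verification that can be postponed; it is precisely the paper's Lemma 3.3, and it is the main technical content of the whole proof. The paper proves it by contradiction: if the boundary $3$-torus compresses in $W$, one first passes to the quotient orbifold $Y_1$ and applies the Loop Theorem to get an embedded compressing disc $D$ in the underlying manifold $|Y_1|$; condition (ii), together with the infinite order of the fiber class (Theorem 2.1/1.3), forces $[\partial D]$ to be the meridian $\pi(m)$; capping off then produces a non-separating $2$-sphere in $|Y|$ meeting $\pi(l)$ in a single point, and Corollary 2.4 --- which itself rests on Baldridge's circle-bundle formula, the de-singularization formula for $3$-orbifold Seiberg--Witten invariants, and geometrization of $3$-orbifolds --- forces the Seiberg--Witten invariant of $X$ to vanish, contradicting $\underline{SW}_X\neq 0$. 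So the hypothesis $\underline{SW}_X\neq 0$ enters twice: once through the knot surgery formula (which you have), and a second time, essentially, to establish the incompressibility you assume. Without this lemma, the van Kampen pushout need not be a proper amalgam with injective edge group, and the containment $Z(A\ast_C B)\subseteq Z(A)\cap Z(B)\cap C$ on which both your argument and the paper's rely is unavailable; conditions (i)--(iii) alone do not yield it by elementary $3$-manifold topology.
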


For a proof of Theorem 3.2, we introduce the following notations. Let $Y_1\equiv Y\setminus Nd(\pi(l))$ 
and $X_1\equiv X\setminus Nd(T)$, where $T$ is the $2$-torus $T=\pi^{-1}(\pi(l))$, such
that $X_1$ is the restriction of $\pi$ to the $3$-orbifold $Y_1$. 
Note that the boundary $\partial X_1$ is a $3$-torus $T^3$. We fix three embedded loops in 
$\partial X_1$: $m$, a meridian of $T$, $h$, a fiber of $\pi$, and $l^\prime$, a push-off of $l$ 
with respect to the framing used in the definition of $X_K$. Note that $m,h,l^\prime$ all together 
generate $\pi_1(\partial X_1)$. Furthermore, we assume $m$, $l^\prime$ are sections over 
$\pi(m)$, $\pi(l^\prime)$ respectively. Finally, recall that the underlying manifolds of $Y_1$, $Y$ are
denoted by $|Y_1|$, $|Y|$ respectively. 

\begin{lemma}
The map $i_\ast:\pi_1(\partial X_1)\rightarrow \pi_1(X_1)$ induced by the inclusion is injective.
\end{lemma}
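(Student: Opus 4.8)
The plan is to use the Seifert-type $\s^1$-fibration $\pi:X_1\rightarrow Y_1$ and reduce the injectivity of $i_\ast$ on $\pi_1(\partial X_1)=\langle m,h,l^\prime\rangle\cong\Z^3$ to two independent facts: that the fiber class $h$ has infinite order in $\pi_1(X_1)$, and that the peripheral torus $\partial Y_1$ is incompressible in the $3$-orbifold $Y_1$, i.e.\ that $j_\ast:\pi_1(\partial Y_1)=\Z^2\rightarrow\pi_1^{orb}(Y_1)$ is injective, where $j:\partial Y_1\hookrightarrow Y_1$. First I would write down the homotopy exact sequence of the orbifold circle bundle $\pi:X_1\rightarrow Y_1$,
\[
\pi_2^{orb}(Y_1)\xrightarrow{\ \delta\ }\pi_1(F)\longrightarrow\pi_1(X_1)\xrightarrow{\ \pi_\ast\ }\pi_1^{orb}(Y_1)\longrightarrow 1,
\]
together with the corresponding circle bundle $\rho=\pi|_{\partial X_1}:\partial X_1\rightarrow\partial Y_1$, under which the fiber $h$ maps to $1$ while $m,l^\prime$ map to the meridian $\pi(m)$ and longitude $\pi(l^\prime)$ generating $\pi_1(\partial Y_1)\cong\Z^2$. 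Given $w=m^ah^b(l^\prime)^c$ with $i_\ast(w)=1$, applying $\pi_\ast$ and using $\pi\circ i=j\circ\rho$ gives $j_\ast(\pi(m)^a\pi(l^\prime)^c)=1$; so if $j_\ast$ is injective then $a=c=0$, and then $i_\ast(h)^b=1$ forces $b=0$ once $i_\ast(h)$ has infinite order.

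The infinite order of $h$ I would obtain for free from Theorem 2.1. The inclusion $X_1\hookrightarrow X$ sends $h$ to the homotopy class of a principal orbit of the $\s^1$-action on $X$, which has infinite order because $\underline{SW}_X\neq 0$ and $b_2^+>1$; hence its preimage $i_\ast(h)\in\pi_1(X_1)$ also has infinite order. Moreover Theorem 2.1 shows the fiber subgroup $\langle h\rangle\cong\Z$ is central in $\pi_1(X)$ and is exactly the kernel of $\pi_\ast:\pi_1(X)\rightarrow\pi_1^{orb}(Y)$, a fact I will use below.

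The substance of the lemma, and the step I expect to be the main obstacle, is the incompressibility of $\partial Y_1$. I would argue by contradiction: if $\ker j_\ast$ is nontrivial it contains a primitive class $\alpha_0=\pi(m)^p\pi(l^\prime)^q$ with $\gcd(p,q)=1$, represented by an embedded essential simple closed curve on $\partial Y_1$ that is null-homotopic in $Y_1$. If $q\neq 0$, I map the null-homotopy forward into $Y$: gluing back the solid torus $Nd(\pi(l))$ kills the meridian, so the image of $\alpha_0$ in $\pi_1^{orb}(Y)$ is $[\pi(l)]^q$, whence $[\pi(l)]^q=1$. But condition (ii) forbids this, since a nontrivial power of $[\pi(l)]$ equal to $1$ in $\pi_1^{orb}(Y)$ would pull back to a power of $[l]$ lying in the central fiber subgroup $\langle h\rangle\subset\pi_1(X)$, hence a nontrivial central power of $[l]$; so $q=0$. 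In the remaining case $\alpha_0=\pi(m)$ is the meridian, where homology detects nothing and a sphere must be produced geometrically: lifting to the finite manifold cover $\tilde Y$ of Theorem 2.1 and invoking the equivariant loop/sphere theorem of Meeks--Yau, I would promote the null-homotopy to an embedded compressing disk and cap it off with a meridian disk of $Nd(\pi(l))$ to form a $2$-sphere meeting the core $\pi(l)$ transversally in a single point, hence non-separating in $|Y|$; Corollary 2.4 then yields $SW_X=0$, contradicting the hypothesis.

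The delicate points will all be concentrated in this last case: passing from an orbifold null-homotopy to a genuine embedded disk (carried out in the manifold cover and made equivariant, exactly as in the proof of Theorem 2.1), and verifying that $b_1(Y)>1$ so that Corollary 2.4 is applicable. Once $\partial Y_1$ is shown incompressible, the reduction in the first paragraph completes the proof.
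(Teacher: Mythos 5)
Your overall strategy is essentially the paper's, reorganized at the orbifold level: both arguments reduce the lemma to (a) the infinite order of the fiber class $[h]$, via Theorem 2.1, and (b) incompressibility of the peripheral torus in the base; both kill a longitude component of a would-be kernel element using condition (ii) together with the centrality of $\langle [h]\rangle=\ker\pi_\ast$; and both dispose of the meridian direction by producing an embedded non-separating $2$-sphere in $|Y|$ meeting $\pi(l)$ once and invoking Corollary 2.4 (the needed $b_1(Y)>1$ is fine: as in the proof of Theorem 1.2, $b_1(Y)$ equals $b_2^{+}(X)$ or $b_2^{+}(X)+1$, and $b_2^{+}(X)>1$ here). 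The paper instead passes to the underlying manifold at once: a nontrivial kernel of $i_\ast$ forces a nontrivial kernel of $\pi_1(\partial Y_1)\rightarrow\pi_1(|Y_1|)$, the ordinary Loop Theorem yields an embedded compressing disk $D\subset|Y_1|$, and the case analysis is run on $[\partial D]=s[\pi(m)]+t[\pi(l')]$ rather than on an algebraic kernel element. Your choice to argue in $\pi_1^{orb}$ actually makes the longitude case cleaner, since triviality in $\pi_1^{orb}(Y_1)$ pushes directly to $\pi_1^{orb}(Y)$ and hence into $\ker\pi_\ast\subset\pi_1(X)$.

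The gaps are concentrated in your meridian case. First, the primitivity claim is false: a nontrivial subgroup of $\Z^2$ need not contain a primitive class (e.g.\ $2\Z\times\{0\}$). This is harmless when $q\neq 0$ (any kernel element with $q\neq0$ suffices), but after excluding $q\neq0$ you only know $[\pi(m)]^p=1$ in $\pi_1^{orb}(Y_1)$ for some $p\neq 0$, not that $\pi(m)$ itself dies; if $p=\pm1$ you would not even need Meeks--Yau, since Dehn's Lemma applied in $|Y_1|$ bounds the curve $\pi(m)$ itself by an embedded disk, and it is exactly when $|p|>1$ that a loop-theorem argument is forced. Second, and more seriously, the Loop Theorem --- equivariant or not --- does not ``promote the null-homotopy'' of your chosen curve to an embedded disk with the \emph{same} boundary: it produces a disk whose boundary is some essential simple closed curve on $\partial Y_1$, and your plan to cap it off with a meridian disk of $Nd(\pi(l))$ silently assumes that curve is isotopic to $\pi(m)$. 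This must be proved by ruling out a longitude component of the \emph{new} boundary curve; it is precisely the step the paper carries out for its Loop-Theorem disk (there, $t\neq 0$ is shown to violate condition (ii)). Your framework can absorb the repair: the image in $Y_1$ of a Meeks--Yau equivariant compressing disk is an embedded discal $2$-suborbifold, so its boundary class is torsion in $\pi_1^{orb}(Y_1)$, your $q\neq 0$ argument then forces the longitude component to vanish, and an embedded essential curve on the torus with $t=0$ is isotopic to $\pi(m)$, after which the capping and Corollary 2.4 finish the proof. But as written this identification of the disk's boundary is missing, and it is not a cosmetic omission.
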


\begin{proof}
Suppose to the contrary that $i_\ast:\pi_1(\partial X_1)\rightarrow \pi_1(X_1)$ has a nontrivial 
kernel. Then $\pi_1(\partial Y_1)\rightarrow \pi_1(|Y_1|)$ must also have a nontrivial kernel. To see this, suppose $\gamma\neq 0$ lies in the kernel of $i_\ast$. Then $\pi_\ast (\gamma)$ lies in the kernel of 
$\pi_1(\partial Y_1)\rightarrow \pi_1^{orb}(Y_1)$. On the other hand, if 
$\pi_\ast (\gamma)=0$, then $\gamma$ is a multiple of $[h]$, which is zero in $\pi_1(X_1)$
by assumption. This contradicts to the fact that $[h]$ has infinite order in $\pi_1(X)$ 
(cf. Theorem 1.3).  Hence $\pi_1(\partial Y_1)\rightarrow \pi_1^{orb}(Y_1)\rightarrow \pi_1(|Y_1|)$ 
has a nontrivial kernel. 

Now by the Loop Theorem, $\partial Y_1$ is compressible in $|Y_1|$. This means that there is an 
embedded disc $D\subset |Y_1|$ such that (1) $D\cap \partial Y_1=\partial D$, (2) $\partial D$ is a homotopically nontrivial simple closed loop in $\partial Y_1$. We claim that $\partial D$ must be a 
copy of the meridian $\pi(m)$ of $\pi(l)$ in $Y$. 

To see this, write $[\partial D]=s \cdot [\pi(m)] +t \cdot [\pi(l^\prime)]$ in $\pi_1(\partial Y_1)$.
If $t\neq 0$, then $[\pi(l)]$ has finite order in $\pi_1^{orb}(Y)$. This implies that a nontrivial 
power of $[l]$ lies in the subgroup generated by $[h]$, which lies in the center of $\pi_1(X)$, 
a contradiction to (ii). Hence $t=0$, and $s=1$ with $[\partial D]=[\pi(m)]$, so that $\partial D$ is 
isotopic to $\pi(m)$. 

With the preceding understood, the median $\pi(m)$ bounds an embedded disk 
$D\subset |Y_1|$. It follows that there is a non-separating $2$-sphere in $|Y|$ intersecting 
with $\pi(l)$ in exactly one point. By Corollary 2.4, $X$ has vanishing Seiberg-Witten invariant, 
a contradiction to $\underline{SW}_X\neq 0$. Hence Lemma 3.3.

\end{proof}

\noindent{\bf Proof of Theorem 3.2}

\vspace{2mm}

Since $K$ is a nontrivial knot, $\pi_1(\partial((\s^3\setminus Nd(K))\times \s^1))\rightarrow \pi_1((\s^3\setminus Nd(K))\times \s^1)$ is injective. With Lemma 3.3, this implies that 
$\pi_1(X_K)$ is an amalgamated free product 
$$
\pi_1(X_K)= \pi_1(X_1) \ast _{\pi_1(T^3)} \pi_1((\s^3\setminus Nd(K))\times \s^1),
$$
where $\pi_1(T^3)=\pi_1(\partial X_1)=\pi_1(\partial ((\s^3\setminus Nd(K))\times \s^1))$. 

Since $K$ is not a torus knot, the knot group of $K$ has trivial center (cf. \cite{Rolf}). This implies
that the center of $\pi_1((\s^3\setminus Nd(K))\times \s^1)$ is generated by the class of 
$\{pt\}\times \s^1$. On the other hand, by the construction of $X_K$, $\{pt\}\times \s^1$ is identified 
with $l^\prime$, whose class equals the class of $l$ modulo the class of $m$. By condition (ii),
no nontrivial powers of the homotopy class of $l$ is contained in the center of $\pi_1(X)$. 
Consequently, $\pi_1(X_K)$ has trivial center. Now $\underline{SW}_X\neq 0$ implies 
$\underline{SW}_{X_K}\neq 0$; in particular, $X_K$ has nonzero Seiberg-Witten invariant. 
By Theorem 1.3, $X_K$ does not support any smooth $\s^1$-actions. This completes the
proof of Theorem 3.2. 

\vspace{3mm}

The remaining of this section is occupied by the proofs of Theorems 1.1 and 1.2. 

\vspace{3mm}

\noindent{\bf Proof of Theorem 1.2}

\vspace{3mm}

Let $X$ be a smooth $\s^1$-four-manifold with $b_2^{+}>1$ and nonzero Seiberg-Witten
invariant, such that $H^2(X;\Z)$ has no $2$-torsions. We note that (i) the $\s^1$-action is 
fixed-point free, (ii) $\underline{SW}_X\neq 0$. Moreover, by Theorem 1.3, $\pi_1(X)$ has 
infinite center. For simplicity, we denote the center of a group $G$ by $z(G)$.

{\bf Case (1):} $\text{rank }z(\pi_1(X))=1$. In this case, we shall apply Theorem 3.2 to $X$ 
with a nontrivial knot $K$ whose Alexander polynomial is trivial. (Note that such a $K$ can not 
be a torus knot.) It suffices to show the existence of an embedded loop $l\subset X$ satisfying 
the conditions (i)-(iii). 

To this end, let $\pi: X\rightarrow Y$ be the corresponding Seifert-type $\s^1$-fibration. If 
the Euler class of $\pi$ is non-torsion, then $b_1(Y)=b_2^{+}(X)+1>2$, and if the Euler class
is torsion, one has $b_1(Y)=b_2^{+}(X)>1$, see Baldridge \cite{Bald2}, Theorem 9.  With this 
understood, it follows easily that there is an embedded loop $\gamma$ lying in the complement
of the singular set of $Y$, such that the class of $\gamma$ is not a multiple of the Poincar\'{e}
dual of the Euler class of $\pi$ over $\Q$. Let $T=\pi^{-1}(\gamma)\subset X$ and let $l$ be
any section of $\pi$ over $\gamma$. Then $T$ is non-torsion (cf. Baldridge \cite{Bald2}, Theorem 9)
so that (iii) is satisfied. Moreover, (i) is clearly true. Finally, (ii) follows from the fact that the 
homotopy class of $l$ is infinite order and the non-torsion part of the center $z(\pi_1(X))$ is
isomorphic to $\Z$ containing the regular fiber class of $\pi$ (cf. Theorem 1.3). 

{\bf Case (2):} $\text{rank }z(\pi_1(X))>1$. In this case, we shall invoke the following smooth classification proven in \cite{C2} (see Theorem 4.3 therein):
\begin{itemize}
\item [{(a)}] If $\text{rank }z(\pi_1(X))>2$, then $X$ is diffeomorphic to the $4$-torus $T^4$.
\item [{(b)}] If $\text{rank }z(\pi_1(X))=2$ and $\pi_2(X)\neq 0$, then $X$ is diffeomorphic to 
$T^2\times \s^2$.
\item [{(c)}] If $\text{rank }z(\pi_1(X))=2$ and $\pi_2(X)=0$, then $X$ is diffeomorphic to 
$\s^1\times N^3/G$, where $N^3$ is an irreducible Seifert $3$-manifold with infinite fundamental 
group, and $G$ is a finite cyclic group acting on $\s^1\times N^3$ preserving the product structure 
and orientation on each factor, and the Seifert fibration on $N^3$.
\end{itemize}

Note that case (b) is irrelevant here because of the assumption $b_2^{+}>1$. Consider case (c) 
where $X=\s^1\times N^3/G$. As it is given, $X$ admits a Seifert-type $\s^1$-fibration 
$\pi: X\rightarrow Y$ whose Euler class is torsion, where $Y=N^3/G$ admits a
Seifert fibration $pr: Y\rightarrow B$. With this understood, we note that $b_2^{+}(X)>1$
implies that $b_1(B)>0$. It follows easily that there is an embedded loop $\gamma\subset Y$ 
in the complement of the singular set of $Y$, such that $[\gamma]\neq 0$ in $H_1(|Y|;\Q)$
and $\gamma$ is not homotopic to a regular fiber of $pr: Y\rightarrow B$. A section of $\pi$
over $\gamma$, taken to be $l$, satisfies (i)-(iii), so that case (c) follows by Theorem 3.2. 

Finally, we consider case (a) where $X=T^4$. First, applying Theorem 3.2 with a nontrivial 
knot $K$ whose Alexander polynomial is trivial, we obtain a $4$-manifold $X^\prime$ 
diffeomorphic to $\s^1\times M$ where $M$ is a homology $T^3$ such that $z(\pi_1(M))=0$. 
In particular, $\text{rank }z(\pi_1(X^\prime))=1$. Applying Theorem 3.2 to $X^\prime$ as in
Case (1), we obtain a desired Fintushel-Stern knot surgery manifold of $X$. Hence Theorem 1.2. 

\vspace{2mm}

\noindent{\bf Proof of Theorem 1.1}

\vspace{2mm}

Consider the Kodaira-Thurston manifold $X=S^1\times N^3$, where $N^3= [0,1]\times T^2/\sim$ 
with $(0,x,y)\sim (1,x+y,y)$. The manifold $X$ is naturally a $T^2$-bundle over $T^2$, with $x,y$ 
being coordinates on the fiber. Note further that the translations along the $x$-direction define 
naturally a free smooth $\s^1$-action on $X$. The integral homology of $X$ is given as follows:
$$
H_1(X)=H_3(X)=\Z\oplus\Z\oplus\Z, \mbox{ and } H_2(X)=\Z\oplus\Z\oplus\Z\oplus\Z.
$$
In particular, $b_2^{+}=2$ and $H^2(X;\Z)$ has no $2$-torsions. Finally, as a $T^2$-bundle 
over $T^2$, $X$ has a symplectic structure with $c_1(K_X)=0$ (cf. Thurston \cite{Th}). By work of
Taubes \cite{T1}, $X$ has a unique Seiberg-Witten basic class $c_1(K_X)=0$.

In order to construct the $4$-manifolds $X_n$, $n\geq 2$, we shall apply Theorem 3.2 to $X$ in
an equivariant setting with some fixed nontrivial knot $K$ whose Alexander polynomial is trivial. 
More precisely, we consider the following embedded loop $l$ in $X$ which clearly satisfies the
conditions (i)-(iii) required in the construction: we pick any $T^2$-fiber in $X$ and take in that 
fiber an embedded loop parametrized by the $y$-coordinate. Notice that in the present case 
the $2$-torus $T\equiv \pi^{-1}(\pi(l))$ is simply the $T^2$-fiber we picked. 

Now note that the $\s^1$-factor in $X$ defines a natural $\s^1$-action, so that for any integer 
$n\geq 2$ there is an induced free smooth $\Z_n$-action on $X$, which preserves the 
$T^2$-bundle structure. With this understood, we let $X_n$ be a $4$-manifold resulted from 
a repeated application of the construction in Theorem 3.2 which is equivariant with respect 
to the free $\Z_n$-action. By the nature of definition, $X_n$ admits a smooth $\Z_n$-action.
To see that $X_n$ does not support any smooth $\s^1$-actions, we recall the fact that repeated 
knot surgery along parallel copies is equivalent to a single knot surgery using the connected sum 
of the knot (cf. Example 1.3 in \cite{C0}). Finally, it is clear that $X_n$ has the same integral
homology and intersection form and the same (and nonzero) Seiberg-Witten invariant of the 
Kodaira-Thurston manifold. Hence Theorem 1.1.

\vspace{3mm}

\noindent{\bf Acknowledgments}: I wish to thank Ron Fintushel for communications regarding the
knot surgery formula for Seiberg-Witten invariants, Dieter Kotschick for bringing to my attention 
his related work, and Inanc Baykur for helpful comments. Part of this work was done during
the author's visits to the Max Planck Institute for Mathematics, Bonn and the Institute for Advanced 
Study, Princeton. I wish to thank both institutes for their hospitality and financial support. Finally, 
I wish to thank Slawomir Kwasik for his timely advices, particularly his help with the related 
results in the literature. 

\vspace{2mm}

\vspace{2mm}

{\Small University of Massachusetts, Amherst.\\
{\it E-mail:} wchen@math.umass.edu

\end{document}